\g@addto@macro{\UrlBreaks}{\UrlOrds}
\providecommand{\noopsort}[1]{} 
\def\qed{\unskip\quad \hbox{\vrule\vbox
to 6pt {\hrule width 4pt\vfill\hrule}\vrule} }
\newcommand{\bez}{\nopagebreak\hspace*{\fill}
 \nolinebreak$\qed$\vspace{5mm}\par}
\newtheorem{Th}{Theorem}[section]
\newtheorem{Prop}[Th]{Proposition}
\newtheorem{Lemma}[Th]{Lemma}
\theoremstyle{definition}
\newtheorem{Remark}[Th]{Remark}
\newtheorem{Def}[Th]{Definition}
\newtheorem{Cor}[Th]{Corollary}
\newcommand{\beq}{\begin{equation}}
\newcommand{\eeq}{\end{equation}}
\def\scalar(#1,#2){(#1\mid#2)}
\newcommand{\raz}{\mathbbm{1}}
\newcommand{\cf}{{\cal F}}
\newcommand{\ov}{\overline}
\newcommand{\Z}{{\mathbb{Z}}}
\newcommand{\N}{{\mathbb{N}}}
\newcommand{\NN}{\mathbb{N}}
\newcommand{\ZZ}{\mathbb{Z}}
\newcommand{\Card}[1]{\mathopen\rvert #1 \mathclose\lvert} 
\DeclareMathOperator{\Supp}{supp} 
\newcommand{\DefAs}{:=}
\newcommand{\AsDef}{=:}
\newcommand{\Alphab}{\{ 0 , 1 \}} 
\newcommand{\Bset}{\mathscr{B}} 
\newcommand{\Bseq}{\eta} 
\newcommand{\BfreeSub}{X_{\eta}} 
\newcommand{\BadmSub}{X_{\Bset}} 
\newcommand{\Shift}{S}
\newcommand*{\BlCode}{\phi} 
\newcommand*{\Wind}{\ell} 
\newcommand{\Length}[1]{\lvert #1 \rvert} 
\newcommand{\Restr}[3]{#1(#2 , #3)} 
\title{On the Garden of Eden theorem \\for $\mathscr{B}$-free subshifts}
\author{G.~Keller, M.~Lema\'nczyk, C.~Richard, D.~Sell}
\begin{document}
\maketitle
\thispagestyle{empty}

\begin{abstract}
We prove that on $\mathscr{B}$-free subshifts, with $\mathscr{B}$ satisfying the Erd\"os condition, all cellular automata are determined by monotone sliding block codes. In particular,  this implies the validity of the Garden
 of Eden theorem for such systems.
\end{abstract}

\section{Introduction}

\subsection{Sets of multiples and \texorpdfstring{$\mathscr{B}$}{B}-free sets}

For a set $\mathscr{B}$ of natural numbers, consider its multiples $\mathcal M_{\mathscr B}=\bigcup_{b\in \mathscr B} b\mathbb Z$. Interest in sets of multiples arose in the 1930's from \textit{abundant integers}, i.e., integers $n$ whose sum of proper divisors is larger than or equal to $|n|$. It is readily seen that abundant integers are closed under multiplication by integers, hence they indeed constitute a set of multiples. See \cite[Sec.~11]{Dy-Ka-Ku-Le} for further properties. In the following we restrict, without loss of generality and without further mentioning, to \textit{primitive} $\mathscr B$, i.e., no element from $\mathscr B$ divides any other element from $\mathscr B$. We also exclude the trivial case $\mathscr{B}=\{1\}$  from now on.%
\footnote{This is done for convenience,  see the discussion in Section \ref{sec:Badm}.}
A rich class of sets of multiples is provided by the case of $\mathscr B$ being \textit{Erdös}, where $\mathscr B$ is assumed infinite which excludes periodic sets of multiples. Moreover, $\mathscr B$ is assumed coprime and \textit{thin}, i.e., $\sum_{b\in \mathscr{B}}1/b<\infty$. The latter guarantees that the multiples have a natural density and motivates the attribution to Erdös, see the discussion in Chapter 1 of \cite{Ha}.

\smallskip

Intimately related to sets of multiples are the sets $\mathcal F_{\mathscr B}=\mathbb Z\setminus \mathcal M_{\mathscr B}$ of \textit{$\mathscr B$-free integers}. In particular, the abundant integers yield the \textit{deficient integers}. Over the last years, $\mathscr B$-free integers have been intensely studied from a dynamical perspective, see \cite{Dy-Ka-Ku-Le} for a detailed account. Historically, the guiding example seems \textit{square-free integers}, which is obtained from the Erdös set  $\mathscr B=\{p^2: p \text{ prime}\}$. On one side, the influential article by  Sarnak \cite{Sa} listed, among other results, a number of dynamical properties of the subshift generated by the square-free integers. On the other side, square-free integers are pure point diffractive \cite{bmp00, Ri-St}, in spite of their apparent irregularity. As mathematical diffraction theory \cite{Babook} was developing from the 1990's onwards, a dynamical perspective on diffraction emerged, since pure point diffractivity had been shown to be equivalent to discrete dynamical spectrum of the subshift generated by the structure \cite{Ba-Le}, when equipped with its natural pattern frequency measure.
In fact, squarefree integers \cite[Ch.~10]{me73} and general $\mathscr{B}$-free integers are examples of Meyer's weak model sets, and this viewpoint was very fruitful not only for analysing the $\mathscr{B}$-free subshift dynamics \cite{Ka-Ke-Le, Ke, Ke-2020}, but also for developing a dynamical structure theory of general weak model sets \cite{Ke-Ri, Ke-Ri-St, KR19, Ke-2020}. Moreover,  $\mathscr{B}$-free subshifts admit natural multidimensional generalizations to lattices and number fields \cite{Ba-Hu, Ce-Vi, Dy1}, some of which may serve as useful models in physics.

\smallskip

Despite their exceptional naturality, $\mathscr{B}$-free subshifts display an impressive dynamical diversity varying from proximality in the Erdös case to minimality, which leads to the classical theory of Toeplitz subshifts \cite{Dy-Ka-Ku-Le}. In both cases their entropy can be both zero or positive. In this paper we deal with the proximal case, with initial focus on Erdös sets. In this latter case, on one hand, the associated dynamical systems exhibit properties characteristic of the full shift:  their entropy is positive, their set of invariant measures is the Poulsen simplex \cite{Ku-Le-We1, Ko-Ku-Kw}, and they are intrinsically ergodic \cite{Ku-Le-We}. Moreover, they are hereditary: if $x$ belongs to the subshift and if $y\leq x$ coordinatewise, then $y$ belongs to the subshift \cite{Ab-Le-Ru}. On the other hand, the unique measure of maximal entropy is not Gibbs \cite{Ku-Le(jr)}, a rather surprising property that was first discovered for the square-free subshift \cite{Pe}. Moreover, their automorphism group is trivial, see below.

\subsection{Endomorphisms of subshifts}

In recent years, some effort has been made in order to understand the endomorphism group ${\rm End}(X,S)$ and the automorphism group ${\rm Aut}(X,S)$ of a subshift $(X,S)$. In the proximal case, unless $|X|=1$, ${\rm End}(X,S)$ is larger than ${\rm Aut}(X,S)$, as the map sending each point to the fixed point of $S$ is a member of  ${\rm End}(X,S)$. Clearly, both ${\rm End}(X,S)$ and ${\rm Aut}(X,S)$ are valuable  topological conjugacy invariants. For Erdös $\mathscr{B}$-free subshifts $(X,S)$, Mentzen's theorem \cite{Me} states that ${\rm Aut}(X,S)$ is trivial, i.e., it consists only of the powers of the shift. Whereas Mentzen's proof was based on arithmetic arguments, an extension beyond the Erdös case by dynamical methods was recently given in \cite[Thm.~4.1]{Ke-2020}. See also \cite{Ba-Hu-Le} for a multidimensional generalisation. In the Toeplitz case, only partial results are available \cite{Dy, Dy-Ka-Ke}.

\smallskip

In this article we study endomorphisms of proximal $\mathscr{B}$-free subshifts. We adopt here language from the theory of cellular automata \cite{Ha-Mu} and call an endomorphism of a subshift $(X,S)$ a {\em Cellular Automaton} (CA). By the classical Curtis-Lyndon-Hedlund (CLH) theorem \cite[Thm.~6.2.9]{Li-Ma}, each CA $\tau:X\to X$ is given by a {\em (sliding block) code}  $\phi:\{0,1\}^{\{-\ell,\ldots,\ell\}}\to \{0,1\}$ for some $\ell\geq0$ via the formula\footnote{It is implicit here that we consider only those blocks $B\in\{0,1\}^{\{-\ell,\ldots,\ell\}}$ that appear in some $x\in X$.}
\beq\label{clh}
\tau(x)(n)=\phi(x(n-\ell,n+\ell))\text{ for all } x\in X\text{ and all }n\in\Z.
\eeq
It follows that ${\rm End}(X,S)$  is countable, which directed a lot of effort towards classifying all possible countable groups that can appear as invertible CA of a subshift \cite{Cy-Kr, Do-}. Let us restrict to hereditary subshifts. In that case, if a code $\phi$ is {\em monotone}, i.e., $\phi(B)\leq B(0)$ for each block $B\in\{0,1\}^{\{-\ell,\ldots,\ell\}}$, then the induced map $\tau$ is an endomorphism. (Note that monotone codes  for hereditary $\mathscr{B}$-free subshifts are already mentioned in the preprint version of \cite{Dy}.) The full shift shows that the converse does not hold: it is hereditary, but there are CA, considered modulo the powers of the shift, which do not arise from monotone codes. The following two questions are natural:
\begin{itemize}
\item[(i)] When is every CA given by a monotone code, up to a power of the shift?
\item[(ii)] How to read off non-invertibility properties of a CA from its code?
\end{itemize}

\subsection{Results of the paper}

Let $\tau$ be a CA on $(X,S)$. The elements of $X\setminus \tau(X)$ are called {\em Garden of Eden (GoE) configurations} in the theory of cellular automata, see \cite{Ce-Co} for a recent review. The name arises from the fact that such configurations can only occur at time $0$, when considering iterates $\tau^{k}(X_0)$ of an initial set $X_0\subset X$. A surjective CA does not admit any GoE configuration. In that context, a crucial property of a CA is a weakened form of injectivity: A CA is {\em pre-injective} if it is injective on every collection of configurations that mutually differ in at most finitely many coordinates. One says \textit{``the Garden of Eden theorem holds on some subshift''}, if for any CA on this subshift surjectivity is equivalent to pre-injectivity. The implication that surjectivity implies pre-injectivity is also called the {\em Moore property}, the reverse implication is called the {\em Myhill property}.
It is known that the GoE holds for irreducible subshifts of finite type \cite[Thm.~7.15]{Ce-Co} and that any strongly irreducible subshift satisfies the Myhill property \cite[Thm.~7.10]{Ce-Co}.
In our setting, which is not subsumed by the above results, the GoE theorem holds in particular for the class of Erd\"os $\mathscr{B}$-free subshifts.  In fact, our main result in the Erdös case even answers the two questions raised above.

\begin{Th}\label{t:main} Assume that $\mathscr{B}$ is an Erd\"os set. Then
the corresponding $\mathscr{B}$-free subshift satisfies the GoE theorem. If a CA is onto, then it equals a power of the shift. Moreover, every CA is given by a monotone code, modulo a power of the shift.
\end{Th}

Our result states a very strong form of the Moore property, which implies Mentzen's theorem that the automorphism group of an Erdös $\mathscr{B}$-free subshift is trivial.

\smallskip

Subsequently, the paper discusses extensions beyond the Erdös case. One extension is motivated by the well-known observation that, in the Erdös case, the $\mathscr{B}$-free subshift $(X_\eta,S)$ coincides with the so-called $\mathscr{B}$-admissible subshift $(X_\mathscr{B},S)$, which is arithmetic in nature and explained in Section~\ref{s:initial}. In that sense the Erdös case is ``very arithmetic'' from a dynamical point of view. Theorem~\ref{t:GoEforBehrend} below states the GoE theorem and the extension of Mentzen's theorem for the setting $X_\eta=X_\mathscr{B}$, which reduces to Theorem~\ref{t:main} in the Erdös setting.

\smallskip

In general (remembering that $\mathscr{B}\neq\{1\}$), the condition $X_\eta=X_\mathscr{B}$ implies that $\mathscr{B}$ is infinite and coprime  \cite{Ka-Le}.  In that setting, a positive topological entropy of the $\mathscr{B}$-free subshift is equivalent to $\mathscr{B}$ being thin \cite{Ab-Le-Ru}. Hence positive topological entropy characterises the Erd\"os case in that setting. The case of zero topological entropy corresponds to the coprime \textit{Behrend} case, i.e., to the $\mathscr{B}$-free sets of density zero where $\mathscr{B}$ is coprime. Examples of the latter type are discussed in \cite{Ka-Le}.

\smallskip

The above setting is subsumed by the class of hereditary $\mathscr{B}$-free subshifts. Using arithmetic arguments, we extend Mentzen's theorem to general hereditary $\mathscr{B}$-free subshifts in Theorem~\ref{t1}, which contains \cite[Thm.~4.1]{Ke-2020} as a special case. Concerning GoE properties, we can prove the Myhill property if the subshift has a unique measure of maximal entropy of full topological support, see Proposition~\ref{t:goe1}. (In fact the Myhill property holds for an arbitrary hereditary subshift of the latter form.) We can prove the Moore property under a certain non-degeneracy assumption on the CA, see Lemma~\ref{l:NotAllZeroMoore}.

Let us restrict to hereditary $\mathscr{B}$-free subshifts where $\mathscr{B}$ is taut, see Section~\ref{sec:taut} below for a definition. By \cite[Thm.~3]{Ke}, heredity is equivalent to proximality in the taut case. As thin implies taut, this also generalises the Erdös case. Here, the situation is more transparent, as the first two assertions of Theorem~\ref{t:main} hold and we can characterise the surjectivity in several ways, see Theorem~\ref{Thm:tautprox}. An example (which is not Erd\"os) in that class is given by the deficient integers, compare \cite[Sec.~11]{Dy-Ka-Ku-Le}.

\smallskip

We finally discuss the general proximal case, i.e., $\mathscr{B}$ contains an infinite coprime set \cite[Thm.~B]{Dy-Ka-Ku-Le}. This case contains all Behrend $\mathscr{B}$-free subshifts, which are characterised by being proximal and having zero entropy.
In fact, we can reduce the analysis of the automorphism group to that of a naturally associated \textit{hereditary} subsystem, which is a taut $\mathscr{B}'$-subshift, without changing the density. Recalling that $\mathscr{B}$ is Behrend if the density of $\mathscr{B}$-free integers is zero, it seems appropriate to call this reduction the \textit{relative Behrend} case.

\subsection{Plan of the paper}

The paper is organized follows. In a preparatory Section~\ref{s:initial} we recall some basic theory, concentrating on various equivalences between arithmetic properties of $\mathscr{B}$ and topological properties of the corresponding $\mathscr{B}$-free subshift. Section~\ref{sec:moore} discusses the Moore property for $\mathscr{B}$-admissible subshifts. Section~\ref{sec:myhill} discusses the Myhill property for classes of hereditary subshifts. This leads to a proof of Theorem~\ref{t:GoEforBehrend}, which was discussed in the previous subsection. In Section~\ref{s:mentzen} we show that the heredity of a $\mathscr{B}$-free subshift forces any of its automorphisms to be trivial. This result applies in particular to the Erdös case, and more generally to the proximal case, where $\mathscr{B}$ is additionally assumed to be taut. Section~\ref{sec:hertaut} is devoted to a systematic study of the proximal taut case, which is described by Theorem~\ref{Thm:tautprox}.
In Section~\ref{s:direction} we briefly discuss  CA on general proximal $\mathscr{B}$-free subshifts and formulate some open problems.

\section{\texorpdfstring{$\mathscr{B}$}{B}-free dynamical systems}\label{s:initial}

Here we collect properties of $\mathscr{B}$-free and $\mathscr{B}$-admissible subshifts for later use, which are mainly taken from \cite{Dy-Ka-Ku-Le}. For more general background we refer to the monographs \cite{Ha} and \cite{Li-Ma, Wa, Down}.

\subsection{Some generalities on subshifts}

Consider the compact space $\{0,1\}^{\Z}$ of bi-infinite $0-1$-sequences, equipped with the usual product metric $d$. The {\em support} of $x\in \{0,1\}^{\Z}$ is the set ${supp}(x)=\{j\in\Z:\: x(j)\ne0\}$.
The {\em (left) shift}  $S:\{0,1\}^{\Z}\to \{0,1\}^{\Z}$, defined by $(Sx)(j)=x(j+1)$ for all $j\in\Z$, is a homeomorphism. If $X\subset\{0,1\}^{\Z}$, which we always assume to be non-empty, is closed and $S$-invariant then the resulting dynamical system $(X,S)$ is called a {\em subshift}. $X$ is called {\em hereditary},  if $y\in \{0,1\}^{\Z}$ belongs to $X$ whenever there is some $x\in X$ such that $y\leq x$ coordinatewise. Denote by $\widetilde{X}\subset \{0,1\}^{\Z}$ the smallest hereditary set containing $X$. Given a subshift $(X,S)$, we call the subshift $(\widetilde{X},S)$ its {\em hereditary closure}. We call $x,y\in X$ {\em asymptotic} if $d(S^rx,S^ry)\to 0$ for $|r|\to\infty$. It is easily seen that $x,y$ are asymptotic if and only if they differ in at most finitely many coordinates. If $d(S^{r_k}x,S^{r_k}y)\to 0$ along a sequence $(r_k)$ that tends to infinity, then $x,y$ are called {\em proximal}. A subshift $(X,S)$ is called {\em proximal} if any two elements from $X$ are proximal.


Denote by $M(X,S)$ the simplex of $S$-invariant probability measures on $(X,S)$, which is non-empty by the Krylov-Bogolioubov theorem, see e.g.~\cite[Cor.~6.9.1]{Wa}. Each $\nu\in M(X,S)$ is determined by its values $\nu(B)$ on all finite {\em blocks} or {\em words} $B\in \bigcup_{m=1}^\infty \{0,1\}^{m}$. More precisely, we identify any block $B\in \{0,1\}^m$, where $|B|=m$ is the {\em length} of $B$, with the corresponding cylinder set $\{x\in X:\: x(0,m-1)=B\}$, and thus the set of all finite blocks generates the Borel $\sigma$-algebra on $X$.
Given $\nu\in M(X,S)$ and a sequence $(N_k)$, the element $x\in X$ is {\em generic along $(N_k)$ for $\nu$} if
\beq\label{gener1}
\int f\,d\nu=\lim_{k\to\infty}\frac1{N_k}\sum_{n=1}^{N_k}f(S^nx)
\eeq
for each $f\in C(X)$. If $(N_k)=(k)$, then $x$ is called {\em generic for $\nu$}. Recall that by the variational principle, see e.g.~\cite[Sec.~7.4]{Down} or \cite[Thm.~8.6]{Wa}, the topological entropy $h(X,S)$ is the supremum of the measure-theoretic entropies $h_\nu(X,S)$ with $\nu\in M(X,S)$. Note that for a subshift the supremum is in fact attained, see e.g.~\cite[Fact~7.2.4]{Down}  or \cite[Thms.~8.2, 8.7]{Wa}.  A subshift $(X,S)$ is called {\em intrinsically ergodic} \cite{We0} if there is precisely one measure of maximal entropy.

\
\subsection{\texorpdfstring{$\mathscr{B}$}{B}-free subshifts and their hereditary closures}

We recall  that throughout we assume that $\mathscr{B}\subset \N$ does not contain 1 (and is primitive), if not stated otherwise. Consider a set $\mathcal{F}_{\mathscr{B}}=\Z\setminus \mathcal M_\mathscr{B}$ of $\mathscr B$-free integers. By the Davenport-Erdös theorem, see e.g.~\cite[Thm.~0.2]{Ha}, $\mathscr{B}$-free integers possess the logarithmic density
\begin{displaymath}
\delta(\cf_{\mathscr{B}})=\lim_{N\to\infty}\frac1{\log N}\sum_{n\in\cf_{\mathscr{B}}\cap\{1,\ldots, N\}} \frac{1}{n}\ ,
\end{displaymath}
which coincides with its natural upper density. In the Erdös case, the natural density exists, see e.g.~\cite[Sec.~1.1]{Ha}. Let $\eta=\raz_{\cf_{\mathscr{B}}}\in \{0,1\}^\Z$ denote the characteristic function of $\mathcal{F}_{\mathscr{B}}$ and
define $X_\eta=\ov{\{S^r\eta:\: r\in\Z\}}\subset\{0,1\}^{\Z}$.
The system $(X_\eta,S)$ is the corresponding $\mathscr{B}$-{\em free subshift}.   Due to its particular arithmetic origin, the point $\eta$ enjoys the following {\em maximality} property\footnote{Maximality arguments have been used before e.g.~in \cite[p.~186]{Ba-Hu}.}. As this property is crucial in later arguments, we provide its short proof.

\begin{Lemma}\label{maksymalnosc}
Let $(X_\eta, S)$ be a $\mathscr B$-free subshift. If $\eta\leq y\in X_\eta$ then $\eta=y$.
\end{Lemma}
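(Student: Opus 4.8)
The plan is to exploit the arithmetic structure of $\eta=\raz_{\cf_{\mathscr{B}}}$ together with the fact that $y\in X_\eta$ is a limit of shifts of $\eta$. Suppose $\eta\leq y$ coordinatewise but $\eta\neq y$; then there is some $j_0\in\Z$ with $\eta(j_0)=0$ and $y(j_0)=1$. Since $\eta(j_0)=0$, the integer $j_0$ is a multiple of some $b\in\mathscr{B}$, say $b\mid j_0$. The key observation is that $\mathscr{B}$-freeness is governed entirely by the congruence classes modulo elements of $\mathscr{B}$: for every $k\in\Z$, the value $\eta(k)=0$ whenever $b\mid k$. I would record this as the crucial rigidity feature of $\eta$ — its zeros along the arithmetic progression $b\Z$ are forced.

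The main step is to transfer this constraint from $\eta$ to $y$. Because $y\in X_\eta=\ov{\{S^r\eta:r\in\Z\}}$, there is a sequence $r_m\in\Z$ with $S^{r_m}\eta\to y$ in the product topology. Fix any large window $\{-L,\ldots,L\}$ containing $j_0$; for $m$ large enough, $S^{r_m}\eta$ and $y$ agree on this window, so in particular $\eta(j_0+r_m)=y(j_0)=1$, forcing $b\nmid (j_0+r_m)$, i.e. $r_m\not\equiv -j_0\equiv 0\pmod b$ (using $b\mid j_0$). Now I would examine a second coordinate: consider the position $j_0$ itself in $y$, where $y(j_0)=1$ forces $\eta(j_0+r_m)=1$ for large $m$, while simultaneously $\eta(j_0)=0$ because $b\mid j_0$. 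The productive idea is to look at \emph{all} coordinates in $b\Z$ at once. For every multiple $c=b\cdot t$ of $b$ lying in the window, $\eta(c)=0$, whence $y(c)=0$ by $y\leq\eta$... but this uses $y\leq\eta$, which is the \emph{reverse} inequality and is false here. So the correct exploitation must go the other way.

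Reconsidering, the right mechanism is: since $\eta\leq y$ and $S^{r_m}\eta\to y$, for large $m$ we have $S^{r_m}\eta\geq \eta$ on the window, i.e. $\eta(k+r_m)\geq \eta(k)$ for all $k\in\{-L,\ldots,L\}$. Thus shifting $\eta$ by $r_m$ can only turn zeros into ones on this growing window; in particular $\supp(\eta)\cap\{-L,\ldots,L\}\subseteq \supp(S^{r_m}\eta)$. The heart of the argument is to show this is impossible unless $r_m$ lies in $\mathcal{M}_{\mathscr{B}}$-controlled residues that force $y=\eta$. Concretely, I would use the Chinese Remainder Theorem: given the new position $j_0$ with $b\mid j_0$ where $y(j_0)=1>0=\eta(j_0)$, I seek arbitrarily large windows and a single shift $r$ with $\eta(k+r)\geq\eta(k)$ everywhere on the window yet $\eta(j_0+r)=1$. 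Since $j_0\equiv 0\pmod b$ and $\eta$ vanishes on $b\Z$, any shift placing a $1$ at $j_0$ needs $r\not\equiv 0\pmod b$; but then CRT lets me find some other $\mathscr{B}$-free position $k^\ast$ (a unit modulo every relevant $b'\in\mathscr{B}$) in the window with $\eta(k^\ast)=1$ whose image $k^\ast+r$ becomes a nonzero multiple of some $b'\in\mathscr{B}$, giving $\eta(k^\ast+r)=0<1=\eta(k^\ast)$, contradicting monotonicity of the shift on the window.

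The hard part will be making the CRT selection uniform: I must guarantee, for \emph{every} shift $r$ that lifts the zero at $j_0$ to a one, the existence of a \emph{witness} coordinate $k^\ast$ in the (fixed, finite) window whose $\mathscr{B}$-free status is destroyed by $r$. This is where primitivity and the structure of $\mathcal{F}_{\mathscr{B}}$ enter decisively: one shows that if $r\not\equiv 0\pmod b$ then $r$ necessarily introduces a fresh multiple of some $b'\in\mathscr{B}$ at a previously $\mathscr{B}$-free site within bounded distance, because $\mathscr{B}$-free integers of a given residue pattern recur with bounded gaps controlled by $\prod$ of the small elements of $\mathscr{B}$. I expect the cleanest route is to avoid the limit entirely: directly take $y=S^r\eta$ approximated on windows, reduce to finitely many $b\in\mathscr{B}$ relevant on the window, and invoke CRT to produce the witness; the passage to the limit $y$ is then a routine compactness/continuity argument. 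The genuine obstacle is the combinatorial claim that no nonzero shift can dominate $\eta$ on all sufficiently large windows — equivalently, that $\eta$ is not pointwise dominated by any proper shift of itself — which is precisely the arithmetic content that makes $\eta$ maximal.
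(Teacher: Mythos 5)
Your reduction is correct up to the point you yourself flag as ``the genuine obstacle'', but the route you sketch for overcoming that obstacle fails. Everything hinges on the existence of a \emph{witness} $k^\ast$ in the window with $\eta(k^\ast)=1$ and $\eta(k^\ast+r)=0$, for every shift $r\not\equiv 0 \pmod b$ that dominates $\eta$ on the window and places a $1$ at $j_0$. The CRT-based construction you propose cannot deliver this. First, the Chinese Remainder Theorem needs pairwise coprime moduli, while the lemma is stated for arbitrary primitive $\mathscr{B}$; coprimality is an extra hypothesis (the Erd\"os case), not available here. Second, your supporting claim that ``$\mathscr{B}$-free integers of a given residue pattern recur with bounded gaps'' is false: already for squarefree integers ($\mathscr{B}=\{p^2\}$) the gaps are unbounded, by exactly the kind of CRT argument you invoke elsewhere. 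Third, and most fundamentally, you cannot prescribe the residue class of a $\mathscr{B}$-free integer: you would need some $k^\ast\in \mathrm{supp}(\eta)$ lying in the particular class $-r \bmod b'$, but the residues realized by $\mathcal{F}_{\mathscr{B}}$ are whatever they are, and a nonzero class modulo an element of $\mathscr{B}$ can be entirely disjoint from $\mathcal{F}_{\mathscr{B}}$ (take $\mathscr{B}$ to be the set of all primes: then $\mathcal{F}_{\mathscr{B}}=\{\pm1\}$ and the class $2 \bmod 5$ contains no $\mathscr{B}$-free integer). So the witness's existence is precisely what is in question, and it cannot be conjured by congruence bookkeeping.

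The gap is closed by counting residue classes modulo the single element $b$, rather than hunting for an individual witness; this is what the paper does. Since $\eta\le y$ and $y(j_0)=1$ with $b\mid j_0$, the set $\mathrm{supp}(y)\bmod b$ strictly contains $\mathrm{supp}(\eta)\bmod b$ (the latter misses the class $0$, the former does not), so $|\mathrm{supp}(\eta)\bmod b|<|\mathrm{supp}(y)\bmod b|$. On the other hand, finitely many positions of $y$ already realize all classes in $\mathrm{supp}(y)\bmod b$; a finite word $w$ of $y$ containing them occurs in $\eta$ because $y\in X_\eta$, and translating a word does not change the \emph{number} of residue classes met by its support. Hence $|\mathrm{supp}(\eta)\bmod b|\ge|\mathrm{supp}(w)\bmod b|=|\mathrm{supp}(y)\bmod b|$, a contradiction. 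Your window/domination setup can be finished by the same counting idea: with $R=\mathrm{supp}(\eta)\bmod b$ and $L$ so large that $\mathrm{supp}(\eta)\cap[-L,L]$ meets every class of $R$, domination forces $R+r\subseteq R$, hence $R+r=R$ by finiteness, while $\eta(j_0+r)=1$ gives $r\bmod b\in R$; iterating yields $nr\bmod b\in R$ for all $n\ge1$, so $0\in R$, a contradiction. Either way, the argument is about the shift-invariance of residue-set cardinalities modulo one fixed $b$, not about constructing a $\mathscr{B}$-free integer in a prescribed class.
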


\begin{proof}
Assume for a contradiction that $\eta\neq y$. Then ${\rm supp}(y)$ contains some integer $m$ that is not $\mathscr{B}$-free. Hence  $m=bn$ for some $b\in\mathscr{B}$ and some $n\in\Z$.  Since $\eta\leq y$, we have in particular $|{\rm supp}(\eta) \text{ mod }b|< |{\rm supp}(y) \text{ mod }b|$. This is contradictory, as there is a finite word $w$ in $y$ such that $ |{\rm supp}( w ) \text{ mod } b | = | {\rm supp}(y) \text{ mod } b |$. Indeed, for every $ r \in {\rm supp}(y) \text{ mod } b$ choose first a position $ j \in r+b\Z$ such that $ y(j)=1$. Then take a finite word in $y$ containing all $j$. As $ y \in X_\eta $, the word $w$ is also contained in $\eta$. Hence $|{\rm  supp}( \eta ) \text{ mod }b | \geq |{\rm supp}( w ) \text{ mod }b|$.
\end{proof}

\smallskip

A distinguished measure on $(X_\eta, S)$ is the so-called {\em Mirsky measure} $\nu_\eta$.  The point $\eta$ is generic along a subsequence for $\nu_\eta$, see \cite[Prop.~E]{Dy-Ka-Ku-Le}. In fact, $\eta$ is generic for $\nu_\eta$ in the logarithmic sense, i.e., if we replace  in \eqref{gener1}  the Ces\`aro summation by the logarithmic one \cite[Thm.~4.1, Thm.~2.23]{Dy-Ka-Ku-Le}. In the Erd\"os case, $\eta$ is generic in the usual sense. The Mirsky measure is the push-forward of the Haar measure on an odometer canonically associated to $\mathscr B$. In particular, the system $(X_\eta,\nu_\eta,S)$ is a zero entropy system, which is ergodic and has discrete spectrum \cite[Thm.~F]{Dy-Ka-Ku-Le}.

\smallskip

For a general $\mathscr{B}$-free subshift, no formula for its topological entropy is known. However, a non-trivial upper bound can be given using the description as a weak model set \cite[Thm.~4.5]{Hu-Ri}. The situation changes if we pass to the  hereditary closure $\widetilde{X}_\eta$ of a $\mathscr{B}$-free subshift. This can then be used to obtain a formula for the case of a proximal $\mathscr{B}$-free subshift, as shown in Corollary~\ref{c:entfor} below. In that case the upper bound from \cite{Hu-Ri} is attained. The following results appear as Proposition~K, Theorem J and Remark 10.2 in   \cite{Dy-Ka-Ku-Le}. The proofs rely on earlier results for the Erdös case \cite{Ku-Le-We} and for squarefree integers \cite{Pe}.

\begin{Th}[\cite{Ku-Le-We},\cite{Dy-Ka-Ku-Le}] \label{atw4}
Consider a $\mathscr{B}$-free subshift $(X_\eta,S)$. Then:
\begin{displaymath}
h(\widetilde{X}_\eta,S)=d\cdot \log2,
\end{displaymath}
where $d=\nu_\eta(1)$ is the logarithmic density of 1's in $\eta$, and
\begin{displaymath}
\mbox{$(\widetilde{X}_\eta,S)$ is intrinsically ergodic}
\end{displaymath}
with the measure of maximal entropy $\nu_\eta\ast B(\frac12,\frac12)$, where $B(\frac12,\frac12)$ stands for the $(\frac12,\frac12)$-Bernoulli measure on $\{0,1\}^{\Z}$ and the multiplicative convolution above is the image of the product measure $\nu_\eta\otimes B(\frac12,\frac12)$ via the coordinatewise multiplication map.\footnote{Note that the image of this map is contained in $\widetilde{X}_\eta$.} \qed
\end{Th}

\subsection{Tautness in \texorpdfstring{$\mathscr B$}{B}-free subshifts}\label{sec:taut}

The notion of tautness, which arises naturally in the theory of sets of multiples \cite[Def.~0.11]{Ha}, also turned out to be important for $\mathscr{B}$-free sets. Recall that $\mathscr{B}$ is {\em taut} if
$$
\delta(\cf_{\mathscr{B}})< \delta(\cf_{\mathscr{B}\setminus\{b\}})\text{ for each }b\in\mathscr{B}.
$$
It is easy to see that $\mathscr{B}$ thin implies $\mathscr B$ taut. In particular, the Erd\"os case is taut. From a dynamical point of view, the role of tautness is seen in the following three results. The first one combines  \cite[Cor.~1.11]{Ku-Le(jr)} with \cite[Thm.~2]{Ke}, which is based on a measure-theoretic characterisation of tautness in terms of weak model sets \cite[Thm.~A]{Ka-Ke-Le}.

\begin{Th}[\cite{Ke},\cite{Ku-Le(jr)}]
\label{atw1}
$\mathscr{B}$ is taut if and only if ${\rm supp}(\nu_\eta)=X_\eta$. \qed

\end{Th}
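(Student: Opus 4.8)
The plan is to localise the question at the distinguished point $\eta$ and then read the support condition off the odometer presentation of $\nu_\eta$. First I would record the elementary reduction
\[
{\rm supp}(\nu_\eta)=X_\eta\iff \eta\in{\rm supp}(\nu_\eta).
\]
The forward implication is trivial, and for the converse one uses that ${\rm supp}(\nu_\eta)$ is closed and $S$-invariant: if it contains $\eta$, it contains $\overline{\{S^r\eta:r\in\Z\}}=X_\eta$. Membership $\eta\in{\rm supp}(\nu_\eta)$ means precisely that every cylinder around $\eta$ is charged, i.e. $\nu_\eta(C_L)>0$ for all $L\ge0$, where $C_L=\{x:x(n)=\eta(n)\text{ for }|n|\le L\}$ is the central block of $\eta$. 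Since $\eta$ is generic for $\nu_\eta$ (logarithmically in general, in the usual sense in the Erdős case), $\nu_\eta(C_L)$ equals the logarithmic density of those $t\in\Z$ whose $\mathscr{B}$-freeness pattern on $[t-L,t+L]$ agrees with that of $\eta$ on $[-L,L]$. The theorem is therefore equivalent to the assertion that this matching density is positive for every $L$ if and only if $\mathscr{B}$ is taut.

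Next I would compute these densities in the group rather than in $\Z$, in order to avoid the difficulty that $\mathscr{B}$-freeness is not a local condition. Let $H$ be the $\mathscr{B}$-odometer with Haar measure $m_H$, let $\Delta:\Z\to H$ be the canonical dense embedding, and let $W=\{h\in H:h_b\neq 0\text{ for all }b\in\mathscr{B}\}$ be the window, so that the coding map $\varphi(h)(n)=\mathbbm{1}_W(h+\Delta(n))$ satisfies $\varphi(0)=\eta$ and $\varphi_*m_H=\nu_\eta$, as recalled above. Then $\nu_\eta(C_L)=m_H(U_L)$ with
\[
U_L=\{h\in H:\mathbbm{1}_W(h+\Delta(n))=\mathbbm{1}_W(\Delta(n))\text{ for }|n|\le L\}.
\]
In coordinates, the positions $n$ with $\eta(n)=1$ contribute ``forbidding'' constraints (each excludes finitely many residues of the $h_b$), while the positions with $\eta(n)=0$ contribute ``covering'' constraints (each demands $h_b\equiv -n \pmod b$ for at least one $b$). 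The whole statement thus reduces to showing that these constraints are jointly realisable on a set of positive Haar measure for every $L$ exactly when $\mathscr{B}$ is taut.

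For the direction ``not taut $\Rightarrow$ not full support'' I would exhibit the redundancy dynamically. If $\mathscr{B}$ is not taut there is $b_0\in\mathscr{B}$ with $\delta(\cf_{\mathscr{B}})=\delta(\cf_{\mathscr{B}\setminus\{b_0\}})$, so the set $D$ of positions that are free for $\mathscr{B}\setminus\{b_0\}$ but not for $\mathscr{B}$ (those divisible by $b_0$ but by no other element of $\mathscr{B}$, a nonempty set by primitivity) has logarithmic density $0$. Writing $\eta'=\mathbbm{1}_{\cf_{\mathscr{B}\setminus\{b_0\}}}$, one then has $\eta\lneq\eta'$, yet the two points differ only on the density-zero set $D$; since every block meets $D$ on a set of $t$ of density $0$, the logarithmic block frequencies agree and the generic measures coincide, $\nu_\eta=\nu_{\eta'}$. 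This is the conceptual obstruction: $\nu_\eta$ cannot distinguish $\eta$ from the strictly larger point $\eta'$, so the maximal patterns carried by the central blocks $C_L$ of $\eta$ are charged only through the density-zero coincidence with $\eta'$, whence some $C_L$ has measure $0$ and $\eta\notin{\rm supp}(\nu_\eta)$. Conversely, if $\mathscr{B}$ is taut then no divisor is redundant in this sense, and each covering constraint in $U_L$ can be met by divisors that do not clash with the forbidding constraints, keeping $m_H(U_L)>0$ for every $L$.

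\emph{The main obstacle} is making this last dichotomy quantitative, and it is genuinely infinitary. Because $\mathscr{B}$ is infinite, the window $W$ is a $G_\delta$ set that is not open --- it is the intersection of the clopen conditions $\{h_b\neq0\}$ --- so the freeness (i.e. $1$-)constraints cannot be certified by any finite block, and one must control the combined contribution of arbitrarily large $b\in\mathscr{B}$ to the Haar measures $m_H(U_L)$. Showing that this tail keeps $m_H(U_L)$ bounded away from $0$ under tautness, and forces some $m_H(U_L)=0$ without it, is exactly the analytic content packaged in the measure-theoretic window characterisation of tautness \cite[Thm.~A]{Ka-Ke-Le}; combined with \cite[Thm.~2]{Ke} and \cite[Cor.~1.11]{Ku-Le(jr)} it yields the stated equivalence. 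I note finally that working with logarithmic density throughout is what lets the argument cover every primitive $\mathscr{B}$, the Erdős case being the special situation where ordinary density suffices.
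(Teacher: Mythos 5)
The paper itself contains no proof of Theorem~\ref{atw1}: it is stated as a quoted result, obtained by combining \cite[Thm.~2]{Ke} with \cite[Cor.~1.11]{Ku-Le(jr)}, which in turn rest on the window characterisation of tautness in \cite[Thm.~A]{Ka-Ke-Le}. Your write-up ultimately does the same --- in your final paragraph both implications are delegated to exactly these three references --- so as a proof it is no more self-contained than the paper's citation. The scaffolding you add before that is partly sound: the reduction ${\rm supp}(\nu_\eta)=X_\eta\iff\eta\in{\rm supp}(\nu_\eta)$ is correct, as are the identity $\nu_\eta(C_L)=m_H(U_L)$ (via logarithmic genericity of $\eta$) and the equality $\nu_\eta=\nu_{\eta'}$ for $\eta'=\mathbbm{1}_{\cf_{\mathscr{B}\setminus\{b_0\}}}$ when $\mathscr{B}$ fails to be taut at $b_0$ (the two points differ on a set of logarithmic density zero, hence have equal logarithmic block frequencies).

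The genuine gap is the sentence ``whence some $C_L$ has measure $0$ and $\eta\notin{\rm supp}(\nu_\eta)$''. This does not follow from $\nu_\eta=\nu_{\eta'}$ and $\eta\lneq\eta'$. What the equality of measures gives is ${\rm supp}(\nu_\eta)={\rm supp}(\nu_{\eta'})\subseteq X_{\eta'}$, so to conclude you would need $\eta\notin X_{\eta'}$, i.e.\ that some central block of $\eta$ never occurs in $\eta'$; nothing in your argument addresses this, and a priori the central patterns of $\eta$ could recur elsewhere in $\eta'$ with positive logarithmic frequency. Note also that your auxiliary point lies on the wrong side for the key tool of the paper: removing $b_0$ makes the free set \emph{larger}, so $\eta\le\eta'$, and the maximality Lemma~\ref{maksymalnosc}, which constrains points lying \emph{above} $\eta'$ inside $X_{\eta'}$, is useless here. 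The route that actually closes uses the tautification $\mathscr{B}'$ of Theorem~\ref{atw3}, which satisfies $\eta'\le\eta$ and is in general \emph{not} of the form $\mathscr{B}\setminus\{b_0\}$ (for Behrend $\mathscr{B}$ it is $\{1\}$, giving $\eta'=0^{\Z}$): the taut case of the statement (this is \cite[Thm.~2]{Ke}) applied to $\mathscr{B}'$ gives ${\rm supp}(\nu_{\eta'})=X_{\eta'}$, Theorem~\ref{atw3} gives $\nu_\eta=\nu_{\eta'}$, hence ${\rm supp}(\nu_\eta)=X_{\eta'}$; if this set equals $X_\eta$, then $\eta\in X_{\eta'}$, Lemma~\ref{maksymalnosc} (for $\mathscr{B}'$) forces $\eta=\eta'$, hence $\mathcal{M}_{\mathscr{B}}=\mathcal{M}_{\mathscr{B}'}$, and primitivity forces $\mathscr{B}=\mathscr{B}'$, i.e.\ $\mathscr{B}$ is taut. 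So the whole equivalence reduces to the taut case, which is precisely the part that both you and the paper take from \cite{Ke} without proof.
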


The second result is Theorem C in \cite{Dy-Ka-Ku-Le}.

\begin{Th}[\cite{Dy-Ka-Ku-Le}]\label{atw3}
For any $\mathscr{B}$ there exists a unique taut $\mathscr{B}'$
such that $\eta'\leq \eta$, $X_{\eta'}\subset \widetilde{X}_\eta$, $\nu_\eta=\nu_{\eta'}$, $M(\widetilde{X}_\eta,S)=M(\widetilde{X}_{\eta'},S)$, in particular,
$h(\widetilde{X}_\eta,S)=h(\widetilde{X}_{\eta'},S)$. \qed
\end{Th}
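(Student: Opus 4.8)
The plan is to first build the taut set $\mathscr B'$ by a purely arithmetic tautification of $\mathscr B$, and then to read off the four assertions, using Theorems~\ref{atw1} and~\ref{atw4} to convert arithmetic information into measure-theoretic information. For the construction I would invoke a tautification argument in the spirit of \cite{Ha}: there is a unique taut set of multiples $\mathcal M_{\mathscr B'}$ with $\mathcal M_{\mathscr B}\subseteq\mathcal M_{\mathscr B'}$ and $\delta(\cf_{\mathscr B'})=\delta(\cf_{\mathscr B})$, whose primitive generator is $\mathscr B'$. The point is that non-tautness of $\mathscr B$ means some $b\in\mathscr B$ contributes no density, i.e.\ $\mathcal M_{\mathscr B}\cap b\Z$ is, up to a set of density zero, already covered by a proper divisor of $b$; replacing such redundant elements by the relevant divisors, and passing to the limit while controlling that only density-zero sets are added at each step, produces $\mathscr B'$ as above (this is where working with the logarithmic density, which always exists, is convenient). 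From $\mathcal M_{\mathscr B}\subseteq\mathcal M_{\mathscr B'}$ we get $\cf_{\mathscr B'}\subseteq\cf_{\mathscr B}$, i.e.\ $\eta'\le\eta$ coordinatewise; hence $\eta'\in\widetilde X_\eta$, and since $\widetilde X_\eta$ is closed and $S$-invariant, $X_{\eta'}\subseteq\widetilde X_\eta$ and therefore $\widetilde X_{\eta'}\subseteq\widetilde X_\eta$, giving the inclusion $M(\widetilde X_{\eta'},S)\subseteq M(\widetilde X_\eta,S)$.

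Next I would establish $\nu_\eta=\nu_{\eta'}$. The Mirsky measure of a cylinder $\{x:\ x(s)=1\ (s\in S),\ x(t)=0\ (t\in T)\}$ equals the (logarithmic) density of the set of $n$ with $n+s\in\cf$ for $s\in S$ and $n+t\in\mathcal M$ for $t\in T$; since $\cf_{\mathscr B}$ and $\cf_{\mathscr B'}$ differ on a set of density zero, the set of $n$ on which the two patterns disagree is a finite union of translates of $\cf_{\mathscr B}\setminus\cf_{\mathscr B'}$, hence of density zero, so the pattern densities coincide and $\nu_\eta=\nu_{\eta'}$. Combining this with the tautness of $\mathscr B'$ and Theorem~\ref{atw1}, we obtain $X_{\eta'}={\rm supp}(\nu_{\eta'})={\rm supp}(\nu_\eta)$; thus $\widetilde X_{\eta'}$ is exactly the hereditary closure of the support of the original Mirsky measure. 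This identity is what must be leveraged for the only remaining and genuinely hard point.

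That remaining point, the reverse inclusion $M(\widetilde X_\eta,S)\subseteq M(\widetilde X_{\eta'},S)$, is where I expect the main difficulty. It amounts to showing that every $S$-invariant measure on $\widetilde X_\eta$ is carried by $\widetilde X_{\eta'}=\widetilde{{\rm supp}(\nu_\eta)}$. I would reduce this via the continuous equivariant surjection $\Phi\colon X_\eta\times\{0,1\}^\Z\to\widetilde X_\eta$, $\Phi(x,w)=x\cdot w$: lift $\rho\in M(\widetilde X_\eta,S)$ to an $S\times S$-invariant measure and try to replace its $X_\eta$-marginal by a measure supported on ${\rm supp}(\nu_\eta)$ without changing the $\Phi$-image. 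The subtlety is that $M(X_\eta,S)$ may contain measures whose support leaves ${\rm supp}(\nu_\eta)$, so one cannot argue merely from $\eta$ being (quasi-)generic for $\nu_\eta$; this step needs the structure theory of invariant measures of hereditary $\mathscr B$-free systems, in which tautness (through Theorem~\ref{atw1}) is precisely what forces all of them down onto $\widetilde{{\rm supp}(\nu_\eta)}$. Granting it, $M(\widetilde X_\eta,S)=M(\widetilde X_{\eta'},S)$, and the entropy equality $h(\widetilde X_\eta,S)=h(\widetilde X_{\eta'},S)$ follows from Theorem~\ref{atw4}, since $\nu_\eta=\nu_{\eta'}$ gives the same density $d$ of $1$'s and in fact the common maximal measure $\nu_\eta\ast B(\tfrac12,\tfrac12)$. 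Finally, uniqueness of $\mathscr B'$ follows either from the uniqueness in the arithmetic tautification, or dynamically: $\eta'$ is a maximal point of the fixed subshift ${\rm supp}(\nu_\eta)$ by Lemma~\ref{maksymalnosc}, which determines $\cf_{\mathscr B'}$ and hence the taut set $\mathscr B'$.
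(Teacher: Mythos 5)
You should first note that the paper itself does not prove this statement: it is quoted verbatim from \cite{Dy-Ka-Ku-Le} (Theorem C, Proposition K, Remark 10.2 there), which is why it appears with only a citation in place of a proof. Measured against the proof in that reference, your proposal has the right architecture (arithmetic tautification, then transfer to the dynamics), and the parts you actually carry out are correct: $\eta'\leq\eta$, hence $X_{\eta'}\subset\widetilde X_\eta$ and $M(\widetilde X_{\eta'},S)\subset M(\widetilde X_\eta,S)$, and $\nu_\eta=\nu_{\eta'}$ via the density-zero difference of the two free sets. But the reverse inclusion $M(\widetilde X_\eta,S)\subset M(\widetilde X_{\eta'},S)$ is the genuine mathematical content of the theorem, and you do not prove it: you sketch a lifting through $\Phi(x,w)=x\cdot w$, correctly observe that the $X_\eta$-marginal may charge points outside ${\rm supp}(\nu_\eta)$, state that the step ``needs the structure theory of invariant measures of hereditary $\mathscr B$-free systems,'' and then proceed ``granting it.'' That structure theory (developed in \cite{Ku-Le-We} for the Erd\"os case and extended in \cite{Dy-Ka-Ku-Le}) is exactly the hard part of the proof of Theorem C; nothing available in this paper supplies it, and your appeal to Theorem~\ref{atw1} is circular in the relevant direction, since Theorem~\ref{atw1} controls the taut system $X_{\eta'}$, whereas the measures you must push down live on $\widetilde X_\eta$ for the non-taut $\mathscr B$.

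There are two smaller but real defects. Your working characterization of non-tautness --- that some $b\in\mathscr B$ has $\mathcal M_{\mathscr B}\cap b\Z$ covered, up to density zero, by a proper divisor of $b$ --- is not the correct one; Hall's criterion is that $\mathscr B$ fails to be taut if and only if it contains a set of the form $c\mathscr A$ with $\mathscr A$ Behrend, and the tautification replaces each such $c\mathscr A$ by the single element $c$. Your iterative ``replace redundant elements and pass to the limit'' procedure is neither obviously well defined nor obviously terminating, and the uniqueness you later invoke rests on it. Finally, the proposed ``dynamical'' uniqueness alternative fails as stated: Lemma~\ref{maksymalnosc} shows that $\eta'$ is \emph{a} maximal point of $X_{\eta'}={\rm supp}(\nu_\eta)$, but maximal points are not unique (every $S^k\eta'$ is one, and there may be others not on this orbit), so maximality alone does not recover $\mathcal F_{\mathscr B'}$ from ${\rm supp}(\nu_\eta)$. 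The arithmetic uniqueness route is the viable one, but only after the tautification has been constructed rigorously.
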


The third result combines \cite[Thm.~B]{Dy-Ka-Ku-Le} and \cite[Thm.~3]{Ke}. It relates proximality, which is an invariant of topological conjugacy, to heredity, which serves as a useful tool for analysis of $\mathscr{B}$-free sets, but lacks invariance in general.

\begin{Th}[\cite{Ke}]\label{atw2} Assume that $\mathscr{B}$ is taut. Then
$(X_\eta,S)$ is proximal if and only if $(X_\eta,S)$ is hereditary. \qed
\end{Th}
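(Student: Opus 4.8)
The plan is to prove that, under tautness, both proximality and heredity of $(X_\eta,S)$ are equivalent to the single condition $\mathbf 0\in X_\eta$, where $\mathbf 0$ is the all-zero sequence. Two of the resulting implications are soft and need no tautness. Heredity gives $\mathbf 0\in X_\eta$ at once, since $\mathbf 0\le x$ for every $x\in X_\eta$. For the reverse direction, recall first that each $x\in X_\eta$ is $\mathscr B$-admissible: since $\eta$ vanishes on every coset $b\Z$, each shift $S^r\eta$ vanishes on $-r+b\Z$, and this passes to the orbit closure, so for every $b\in\mathscr B$ there is a residue class modulo $b$ on which $x$ vanishes identically. In particular the constant sequence $\mathbf 1$ is not admissible, hence $\mathbf 1\notin X_\eta$. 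If $(X_\eta,S)$ is proximal, any minimal subsystem is again minimal and proximal, hence a single fixed point; as $\mathbf 1\notin X_\eta$, this fixed point must be $\mathbf 0$, so $\mathbf 0\in X_\eta$.

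It remains to reconnect $\mathbf 0\in X_\eta$ to the two original properties. For proximality I would use that $\mathbf 0\in X_\eta$ forces $\eta$ to contain arbitrarily long runs of zeros, i.e.\ arbitrarily long intervals of consecutive elements of $\mathcal M_{\mathscr B}$, which by \cite[Thm.~B]{Dy-Ka-Ku-Le} is equivalent to $\mathscr B$ containing an infinite pairwise coprime subset and to proximality. The underlying synchronisation is a Chinese Remainder argument: given $x,y\in X_\eta$ and $n\in\N$, choose $2n$ pairwise coprime elements of $\mathscr B$, use $n$ of them to prescribe vanishing classes of $x$ and the others those of $y$, and solve the resulting congruences to find a position $p$ at which both $x$ and $y$ vanish on $\{p,\dots,p+n-1\}$; centring these windows produces a common sequence along which $S^{r_k}x,S^{r_k}y\to\mathbf 0$.

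The genuinely tautness-dependent step, and the one I expect to be hardest, is that $\mathbf 0\in X_\eta$ together with tautness yields heredity. Here I would invoke Theorem~\ref{atw1}, so that tautness gives $\Supp(\nu_\eta)=X_\eta$, reducing the claim to the heredity of $\Supp(\nu_\eta)$. Let $x\in X_\eta$ and $y\le x$, and fix a window $[-m,m]$; it suffices to show that $\nu_\eta$ assigns positive mass to the block $y|_{[-m,m]}$. Since $\nu_\eta$ is the push-forward of Haar measure on the $\mathscr B$-odometer under the coding $h\mapsto\varphi(h)$, with $\varphi(h)(j)=0$ exactly when $j\equiv -h \ (\mathrm{mod}\ b)$ for some $b\in\mathscr B$, the positive-measure set realising $x|_{[-m,m]}$ can be modified as follows. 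For each of the finitely many positions $j\in[-m,m]$ with $y(j)=0<x(j)$, pick a fresh element $b_j\in\mathscr B$ from the infinite coprime subset, coprime to all moduli already in play and larger than $2m$, and impose $h\equiv -j \ (\mathrm{mod}\ b_j)$. Because $b_j>2m$, this additional covering affects only the position $j$ inside the window and leaves all other entries of the block unchanged, while coprimality makes the $b_j$-coordinates independent of each other and of the remaining constraints. The odometer measure therefore factorises over these coordinates, so the block $y|_{[-m,m]}$ is realised with positive $\nu_\eta$-measure. Letting $m\to\infty$ gives $y\in\Supp(\nu_\eta)=X_\eta$, hence heredity.

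The main obstacle is exactly this last measure computation: one must ensure that $\mathscr B$ supplies, on demand, arbitrarily large coprime elements able to cover a single prescribed free position without disturbing the rest of the window---this is where proximality re-enters through the infinite coprime subset---and that the attendant congruence conditions are genuinely independent, so that their intersection with the positive-measure set realising $x$ still has positive Haar measure. Formalising the factorisation of the odometer Haar measure over pairwise coprime moduli, and checking the bookkeeping of which positions each $b_j$ touches, is the technical heart of the argument; by contrast the two soft implications and the Chinese Remainder synchronisation are comparatively routine.
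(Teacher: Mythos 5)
Your two soft implications are fine: heredity trivially gives $0^\Z\in X_\eta$, proximality gives $0^\Z\in X_\eta$ via the unique minimal subset (a fixed point, and $1^\Z\notin X_\eta$ by admissibility), and the passage back from $0^\Z\in X_\eta$ to proximality is legitimately delegated to \cite[Thm.~B]{Dy-Ka-Ku-Le} together with your CRT synchronisation. Note that the paper itself offers no proof of this theorem --- it is quoted from \cite{Ke} combined with \cite[Thm.~B]{Dy-Ka-Ku-Le} --- so your attempt amounts to reproving Keller's Theorem~3, and there the hard direction has a genuine gap, exactly at the step you call the technical heart. The claim that ``coprimality makes the $b_j$-coordinates independent of each other and of the remaining constraints'' is false in general. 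The odometer $G$ is the closure of the diagonally embedded $\Z$ inside $\prod_{b\in\mathscr{B}}\Z/b\Z$, not the full product, and coordinates at moduli sharing a common factor are correlated (they must agree modulo that factor). The infinite pairwise coprime subset furnished by proximality consists of elements coprime to \emph{each other}, but not necessarily to the remaining elements of $\mathscr{B}$. Concretely, take $\mathscr{B}=\{p_ip_j:\, i<j\}$ for an infinite set of primes with $\sum_i 1/p_i<\infty$: this set is primitive and thin, hence taut, and $\{p_1p_2,\,p_3p_4,\dots\}$ is an infinite coprime subset, so the theorem applies to it; yet every element shares a prime with infinitely many others, the coordinates $h_{p_ip_j}$ and $h_{p_ip_k}$ both determine $h\bmod p_i$, and no ``fresh'' $b_j$ coprime to all moduli already in play exists. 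Since the positive-measure event realising a block of $x$ constrains \emph{all} coordinates of $G$ (every $1$ in the block forbids a residue for every $b\in\mathscr{B}$), including those entangled with your chosen $b_j$, the asserted factorisation of Haar measure --- and hence the positivity of the intersection --- does not follow.

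There is also a smaller but real slip: you cannot ``impose $h\equiv -j\pmod{b_j}$'' on the set realising $x|_{[-m,m]}$, because that set requires position $j$ to be uncovered ($x(j)=1$ forces $h_b\not\equiv -j$ for \emph{every} $b$, including $b_j$), so the intersection is empty; one must relax the event to prescribe the block only at positions where $x$ and $y$ agree. Even after this repair, the correlation problem above blocks the estimate, and the natural rescue attempts (density points with respect to the open subgroups of $G$, or approximation of the event by one depending on finitely many coordinates) run into a quantifier clash: the fresh moduli must be chosen coprime to the finitely many coordinates used, which forces them large, while the measure threshold they must beat shrinks with their size. This is presumably why the proof in \cite{Ke} proceeds differently: tautness is converted (via \cite{Ka-Ke-Le}) into Haar regularity of the window $W=\{h\in G:\, h_b\neq 0 \text{ for all } b\in\mathscr{B}\}$ and a resulting two-sided description of the elements of $X_\eta$ in terms of $W$ and its interior, while proximality forces $W$ to have empty interior, after which heredity is immediate. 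Your reduction via Theorem~\ref{atw1} is a sensible start, but the independence argument you rest it on cannot carry the weight.
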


\subsection{Proximal \texorpdfstring{$\mathscr{B}$}{B}-free subshifts and Behrend subshifts}\label{sec:probeh}
Several characterizations of proximality of $\mathscr{B}$-free subshifts can be found in \cite[Thms.~3.7, 3.12]{Dy-Ka-Ku-Le} and \cite[Thm.~C]{Ka-Ke-Le}. We mention here that proximality is equivalent  to the property that the all zero sequence $0^{\Z}$ belongs to $X_\eta$, or to the existence of an infinite coprime  subset of $\mathscr{B}$. Let us call $\mathscr B$ {\em proximal} if the induced $\mathscr B$-free subshift is proximal.
A particular class of proximal systems is obtained when we consider Behrend sets \cite[Def.~0.6]{Ha}, i.e., sets that satisfy  $\delta(\cf_{\mathscr{B}})=0$. It is easily seen that in the Behrend case the Mirsky measure $\nu_{\eta}$ is just the Dirac measure $\delta_{0^{\Z}}$. In fact, in this case, $(X_\eta,S)$ is proximal and uniquely ergodic \cite[Rem.~6.3]{Dy-Ka-Ku-Le}. In particular, it has zero entropy.

\smallskip

Let us recall the following result, which is implicit in the proof of Eqn.\ (25) on page 13 in \cite{Ku-Le(jr)}. Its proof is a rather direct consequence of  Theorem~\ref{atw1}.

\begin{Prop}[\cite{Ku-Le(jr)}]\label{stw1}
For any $\mathscr{B}$ and the unique taut $\mathscr{B}'$ as in Theorem~\ref{atw3}, we have  $X_{\eta'}\subset X_\eta$. \qed
\end{Prop}

This can be used to obtain the following new result about proximal $\mathscr B$-free systems. The proximal case will further be studied in Section~\ref{s:direction}.

\begin{Cor} \label{c:entfor}
If $(X_\eta,S)$ is proximal, then
$$
h(X_\eta,S)=d\cdot \log 2,
$$
where $d=\nu_\eta(1)$. In particular, $h(X_\eta,S)=0$ if and only if $\mathscr{B}$ is Behrend.   Moreover, each proximal $\mathscr{B}$-free subshift is intrinsically ergodic,  and the measure of maximal entropy is $\nu_\eta\ast B(\frac12,\frac12)$.
\end{Cor}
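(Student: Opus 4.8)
The plan is to sandwich $(X_\eta,S)$ between the $\mathscr{B}'$-free subshift $(X_{\eta'},S)$ of the taut backbone from Theorem~\ref{atw3} and its own hereditary closure $\widetilde{X}_\eta$, and then to exploit that both ends carry the same entropy $d\log 2$. Recall that Theorem~\ref{atw3} furnishes a taut $\mathscr{B}'$ with $\eta'\le\eta$ and $\nu_{\eta'}=\nu_\eta$, while Proposition~\ref{stw1} gives $X_{\eta'}\subseteq X_\eta$. Since $X_\eta\subseteq\widetilde{X}_\eta$ always holds, I would first record the chain of inclusions $X_{\eta'}\subseteq X_\eta\subseteq\widetilde{X}_\eta$ together with the identity $h(\widetilde{X}_\eta,S)=d\log 2$, where $d=\nu_\eta(1)$, supplied by Theorem~\ref{atw4}.

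The key step --- and the one I expect to be the main obstacle --- is to upgrade $X_{\eta'}\subseteq X_\eta$ to $\widetilde{X}_{\eta'}\subseteq X_\eta$, which I would do by showing that $(X_{\eta'},S)$ is itself proximal. Here the crucial, and essentially the only arithmetic, observation is that $\eta'\le\eta$ forces every zero of $\eta$ to be a zero of $\eta'$; hence any all-zero block occurring in $\eta$ occurs, at the same positions, in $\eta'$. Proximality of $(X_\eta,S)$ is equivalent to $0^{\Z}\in X_\eta$, i.e.\ to $\eta$ containing arbitrarily long all-zero blocks; by the previous observation $\eta'$ then contains arbitrarily long all-zero blocks as well, so $0^{\Z}\in X_{\eta'}$ and $(X_{\eta'},S)$ is proximal. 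Since $\mathscr{B}'$ is taut, Theorem~\ref{atw2} now yields that $X_{\eta'}$ is hereditary, i.e.\ $\widetilde{X}_{\eta'}=X_{\eta'}\subseteq X_\eta$.

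With this in hand the rest is bookkeeping. Applying Theorem~\ref{atw4} to $\mathscr{B}'$ gives $h(\widetilde{X}_{\eta'},S)=\nu_{\eta'}(1)\log 2=d\log 2$, so monotonicity of topological entropy along $X_{\eta'}=\widetilde{X}_{\eta'}\subseteq X_\eta\subseteq\widetilde{X}_\eta$ squeezes $h(X_\eta,S)=d\log 2$; the equivalence with the Behrend case follows since $d=\nu_\eta(1)$ equals the density $\delta(\mathcal{F}_{\mathscr{B}})$ of $\mathscr{B}$-free integers, which vanishes precisely in the Behrend case. For the measure of maximal entropy, Theorem~\ref{atw4} applied to $\mathscr{B}'$ shows that $(X_{\eta'},S)=(\widetilde{X}_{\eta'},S)$ is intrinsically ergodic with unique maximal measure $\kappa:=\nu_{\eta'}\ast B(\frac12,\frac12)=\nu_\eta\ast B(\frac12,\frac12)$ of entropy $d\log 2$. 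As $\kappa$ lives on $X_{\eta'}\subseteq X_\eta$, it is a maximal measure for $(X_\eta,S)$; and any maximal measure of $(X_\eta,S)$ has entropy $d\log 2=h(\widetilde{X}_\eta,S)$ and lies in $M(\widetilde{X}_\eta,S)$, hence is maximal for $\widetilde{X}_\eta$ and therefore equals $\kappa$ by intrinsic ergodicity of $\widetilde{X}_\eta$. This yields intrinsic ergodicity of $(X_\eta,S)$ with maximal measure $\nu_\eta\ast B(\frac12,\frac12)$.
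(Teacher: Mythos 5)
Your proposal is correct and follows essentially the same route as the paper: the sandwich $X_{\eta'}\subseteq X_\eta\subseteq \widetilde{X}_\eta$ from Theorem~\ref{atw3} and Proposition~\ref{stw1}, heredity of $X_{\eta'}$ via tautness and Theorem~\ref{atw2}, the entropy squeeze using Theorem~\ref{atw4} twice with $\nu_{\eta'}(1)=\nu_\eta(1)$, and intrinsic ergodicity by pushing maximal measures up to $\widetilde{X}_\eta$ and locating $\nu_\eta\ast B(\frac12,\frac12)$ on $X_{\eta'}$. The one step you single out as the ``main obstacle''---proximality of $(X_{\eta'},S)$, which you prove arithmetically via $\eta'\le\eta$ and long zero blocks---is handled in the paper by the immediate observation that a subsystem of a proximal system is proximal (given $X_{\eta'}\subseteq X_\eta$); both arguments are valid.
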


\begin{proof} Let $\mathscr{B}'$ be the unique taut as in Theorem~\ref{atw3}. Then
\begin{displaymath}
h(\widetilde{X}_{\eta'},S)=h(\widetilde{X}_\eta,S).
\end{displaymath}
Now, $X_{\eta'}\subset X_\eta$ in view of Proposition~\ref{stw1}, so $(X_{\eta'},S)$ is, as a subsystem of a proximal system, also proximal. But $\mathscr{B}'$ is taut, so by Theorem~\ref{atw2}, $(X_{\eta'},S)$ is hereditary. Now, by Theorem~\ref{atw4} (used twice) and remembering that $\nu_\eta(1)=\nu_{\eta'}(1)$, we obtain
$$
d\cdot \log 2=h(X_{\eta'},S)\leq h(X_\eta,S)\leq h(\widetilde{X}_\eta,S)=d\log2$$
and the first claim follows. The intrinsic ergodicity in the proximal case follows from  Theorems~\ref{atw3},~\ref{atw2},~\ref{atw4}  and Proposition~\ref{stw1}.
\end{proof}

\subsection{\texorpdfstring{$\mathscr{B}$}{B}-admissible subshifts}\label{sec:Badm}

Recall that $A\subset\Z$ is $\mathscr{B}$-{\em admissible} if, for every $b\in \mathscr{B}$, its set of residue classes $A \mod b=\{a \mod b: a\in A\}$ contains less than $b$ elements. Analogously, a word is called $\mathscr{B}$-admissible if its support is $\mathscr{B}$-admissible. Then a subset of \( \{ 0,1 \}^{\ZZ} \) is $\mathscr{B}$-admissible if and only if all its words are $\mathscr{B}$-admissible. Note that $\mathscr{B}$-admissibility is stable under shifting. Clearly, any $\mathscr{B}$-free set is $\mathscr{B}$-admissible, as $0$ is a missing residue class for each $b\in\mathscr{B}$.
The shift invariant and closed space
$$
X_{\mathscr{B}}:=\{x\in\{0,1\}^{\Z}:\:{\rm supp}(x)\text{ is }\mathscr{B}\text{-admissible}\}
$$
was introduced in \cite{Sa}. The subshift $(X_{\mathscr{B}},S)$ is called the $\mathscr{B}$-{\em admissible} subshift \cite{Dy-Ka-Ku-Le}, see also the discussion of admissibility in \cite[Sec.~2.8]{Dy-Ka-Ku-Le}. As $X_{\mathscr{B}}$ is hereditary by definition, we have
$$
X_\eta \subset \widetilde{X}_\eta\subset X_{\mathscr{B}},
$$
where the above inclusions are strict in general, e.g.\ Example 2.42 in \cite{Dy-Ka-Ku-Le}.  Let us consider the setting $X_\eta=X_{\mathscr{B}}$.
The simplest example is $\mathscr{B}=\{1\}$, for which all statements about $X_\eta=\{0^{\mathbb Z}\}$ reduce to triviality.  In fact the equality $X_\eta=X_{\mathscr{B}}$ implies that $\mathscr{B}=\{1\}$ or that $\mathscr{B}$ is infinite \cite{Ka-Le}. As these two possibilities have to be treated separately, the former case is often excluded for convenience. The equality $X_\eta=X_{\mathscr{B}}$ also implies that $\mathscr B$ is coprime \cite{Ka-Le}.  Hence a substantial subclass is given by the Erdös case \cite[Cor.~4.2]{Ab-Le-Ru}.  As positive topological entropy of the $\mathscr{B}$-free subshift is equivalent to $\mathscr{B}$ being thin \cite{Ab-Le-Ru}, positive entropy characterises the Erdös case. Moreover, in the zero entropy case, there exist coprime Behrend sets for which the above equality holds \cite{Ka-Le}.%
\footnote{In general, for Behrend sets, the above equality need not hold. Indeed,  $X_{\mathscr{B}}$ is always uncountable \cite[Rem~2.41]{Dy-Ka-Ku-Le}, while $X_\eta$ is often countable (take, for example, $\mathscr{B}$ equal to the primes).}



\section{\texorpdfstring{$\mathscr{B}$}{B}-admissible subshifts and the Moore property}\label{sec:moore}


\subsection{Monotonicity in \texorpdfstring{\( \Bset \)}{B}-admissible subshifts}

\begin{Lemma}
\label{lemma:ShiftedCopies}
Assume that \( \Bset \) is coprime and infinite. Let \( u \in \{ 0 ,1  \}^{ \{ k , \hdots , k + \Length{u} -1 \} }  \), with \( k \in \ZZ \), be a \( \Bset \)-admissible word. Let \( N_{u} \DefAs \Card{ \Supp( u ) } \) denote the cardinality of its support. Let \( b_{0} \in \Bset \) be such that \( b_{0} > 2 \Length{u} \). Then there exist a \( \Bset \)-admissible word \( w_{u} \) and a set \( M_{u} = \{ m_{1} , \hdots , m_{ b_0 - N_{u} } \} \) of positions in \( w_{u} \) with \( \Restr{ w_{u} }{ m_{i}+k }{ m_{i} + k+\Length{u}-1 } = u \) for every \( m_{i} \in M_{u} \), such that \( M_{u} \bmod b_{0} = \{ 0 , \hdots , b_{0}-1 \} \setminus (-\Supp( u ) \bmod b_{0}) \) holds. Moreover, we have \( \Supp( w_{u} ) \bmod b_{0} = \{ 1 , \hdots , b_{0}-1 \} \) and \( \Card{ \Supp( w_{u} ) } \leq ( b_{0} - N_{u} ) N_{u} \).
\end{Lemma}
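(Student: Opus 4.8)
The plan is to build \( w_{u} \) by scattering \( b_{0}-N_{u} \) pairwise disjoint copies of \( u \) along \( \Z \), one copy for each residue class that \( M \) must hit, and to fix the offsets of these copies by the Chinese Remainder Theorem so that \( \Bset \)-admissibility survives for \emph{every} \( b\in\Bset \). The only structural inputs I expect to use are coprimality of \( \Bset \) (for the simultaneous congruences) and the size condition \( b_{0}>2\Length{u} \) (for the covering statement).

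First I would record the bookkeeping modulo \( b_{0} \). Since \( \Supp(u) \) lies in a window of \( \Length{u}<b_{0}/2<b_{0} \) consecutive integers, its elements are pairwise distinct modulo \( b_{0} \), so \( -\Supp(u) \bmod b_{0} \) has exactly \( N_{u} \) elements and its complement \( C\DefAs\{0,\dots,b_{0}-1\}\setminus(-\Supp(u)) \) has exactly \( b_{0}-N_{u} \) elements, which matches the required size of \( M \). The combinatorial heart of the statement is that, writing \( A=\Supp(u)\bmod b_{0} \), the set \( \bigcup_{c\in C}(c+A) \) equals \( \{1,\dots,b_{0}-1\} \) modulo \( b_{0} \). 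I would prove this by identifying the uncovered residues: a residue \( t \) fails to lie in \( \bigcup_{c\in C}(c+A) \) exactly when \( t-a\in -A \) for every \( a\in A \) (otherwise \( c=t-a\in C \) covers \( t \)), i.e. when \( a-t\in A \) for all \( a \), i.e. when \( A-t\subseteq A \), which by equality of cardinalities means \( A-t=A \). Thus the uncovered residues are precisely the translation stabiliser \( \{t:\,A+t=A\} \) of \( A \), and it remains to show this stabiliser is \( \{0\} \).

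This triviality is where \( b_{0}>2\Length{u} \) enters, and it is the step I expect to be the main obstacle. A nontrivial stabiliser is a subgroup \( \langle g\rangle \) with \( g\mid b_{0} \) and \( 1\le g\le b_{0}/2 \), and invariance would force \( A \) to contain a full coset \( a+\langle g\rangle \), that is, all \( b_{0}/g \) residues congruent to \( a \) modulo \( g \). Such a coset cannot be contained in fewer than \( b_{0}-g+1 \) consecutive residues, whereas \( A \) lives inside a window of only \( \Length{u}<b_{0}/2\le b_{0}-g \) consecutive residues; this contradiction forces the stabiliser to be \( \{0\} \), and hence \( \bigcup_{c\in C}(c+A)=\{1,\dots,b_{0}-1\} \) modulo \( b_{0} \). (Here \( \Supp(u)\neq\emptyset \) is used; the all-zero word is degenerate for the final assertion.)

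Finally I would realise this union geometrically while keeping admissibility. Enumerate \( C=\{c_{1},\dots,c_{b_{0}-N_{u}}\} \). Whatever the placement, \( \Supp(w_{u}) \) will consist of \( N_{u}(b_{0}-N_{u}) \) ones, so for every \( b\in\Bset \) with \( b>N_{u}(b_{0}-N_{u}) \) some residue class modulo \( b \) is automatically omitted by a counting argument. For the finitely many remaining \( b\in\Bset\setminus\{b_{0}\} \) with \( b\le N_{u}(b_{0}-N_{u}) \), choose by admissibility of \( u \) a residue \( \rho_{b} \) missed by \( \Supp(u) \) modulo \( b \); since \( \Bset \) is coprime, \( b_{0} \) and these finitely many \( b \) are pairwise coprime, so the Chinese Remainder Theorem yields offsets with \( m_{j}\equiv c_{j}\pmod{b_{0}} \) and \( m_{j}\equiv 0\pmod{b} \) for each such \( b \). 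Adding a common large multiple of the product of these moduli makes the \( m_{j} \) strictly increasing with gaps exceeding \( \Length{u} \), so the copies are disjoint; I then set \( M=\{m_{1},\dots,m_{b_{0}-N_{u}}\} \) and let \( w_{u} \) carry \( u \) at each offset \( m_{j} \) and zeros elsewhere. By construction \( \Restr{w_{u}}{m_{j}+1}{m_{j}+\Length{u}}=u \), \( M\bmod b_{0}=C \), and \( \Supp(w_{u}) \) reduces modulo \( b_{0} \) to \( \bigcup_{j}(c_{j}+\Supp(u))=\{1,\dots,b_{0}-1\} \) by the identity above. Admissibility holds at \( b_{0} \) because residue \( 0 \) is omitted, at each chosen small \( b \) because \( \Supp(w_{u})\equiv\Supp(u)\pmod{b} \) still misses \( \rho_{b} \), and at all large \( b \) by the counting remark, which completes the proof.
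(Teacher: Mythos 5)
Your proof is correct, and it takes a genuinely different route from the paper's at the key step. The construction itself is essentially the same in both: you place $b_{0}-N_{u}$ disjoint shifted copies of $u$ whose starting positions are $\equiv 0$ modulo every small $b\in\Bset\setminus\{b_{0}\}$ and realize exactly the residues of $C=\{0,\dots,b_{0}-1\}\setminus(-\Supp(u))$ modulo $b_{0}$; the paper does this by putting the copies at multiples of $P=\prod_{b\in\Bset\setminus\{b_0\},\,b\le L}b$, which is your Chinese Remainder step in disguise, and admissibility is then verified identically (class $0$ modulo $b_{0}$, the translated missing class modulo each small $b$, a counting bound $N_{u}(b_{0}-N_{u})<b$ for large $b$). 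The genuine difference is the ``Moreover'' covering statement. The paper fixes $j_{1}=\min\Supp(u)$ and $j_{N_{u}}=\max\Supp(u)$, notes that $j_{1}+M$ misses exactly the non-positive set $j_{1}-\Supp(u)$ and $j_{N_{u}}+M$ exactly the non-negative set $j_{N_{u}}-\Supp(u)$, and uses $b_{0}>2\Length{u}$ to conclude that these can only agree modulo $b_0$ at $0$. You instead identify the uncovered residues with the translation stabiliser $\{t:\ A+t=A\}$ of $A=\Supp(u)\bmod b_{0}$, and kill any nontrivial stabiliser by the coset-versus-window count ($A$ would have to contain a full coset of $\langle g\rangle$, needing $b_{0}-g+1>\Length{u}$ consecutive residues). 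Your argument is more algebraic and pinpoints the exact obstruction (nontrivial translation symmetry of $A$), while the paper's min/max argument is more elementary; both invoke $b_{0}>2\Length{u}$ at the same spot. Your explicit exclusion of the empty-support word is a point the paper leaves implicit (its case split $\Card{\Supp(u)}=1$ versus $\Card{\Supp(u)}>1$ tacitly assumes nonempty support).

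One wording repair, not a gap: ``adding a common large multiple of the product of these moduli'' does not separate the $m_{j}$ --- adding the same number to all of them leaves the gaps unchanged. What you mean, and what works, is that each $m_{j}$ is determined only modulo $Q=b_{0}\prod b$, so you may replace $m_{j}$ by $m_{j}+k_{j}Q$ with $k_{j}$ growing fast enough to force gaps exceeding $\Length{u}$; all congruences are preserved, and disjointness of the copies follows.
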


\begin{proof}
First we note that, because of \( b_{0} > \Length{u} \), the elements of \( \Supp( u ) \subseteq \{ k , \hdots , k+\Length{u}-1 \} \) are pairwise different modulo \( b_{0} \). Moreover, \( b_{0} \) is coprime to \( P \DefAs \prod_{b \in \Bset \, \setminus \{ b_{0} \} ,\, b \leq L} b \), where we choose $L\geq N_{u}( b_{0} - N_{u} )$ so that $P>|u|$. We define
\[ M_{u} = \{ m_{1} , \hdots , m_{ b_{0} - N_{u} } \} \subseteq \{ n  P :  n = 0 , \hdots , b_{0}-1 \} \]
to be those \( ( b_{0} - N_{u} ) \)-many numbers that satisfy \( m_{i} \bmod b_{0} \notin ( -\Supp( u ) \bmod b_{0} ) \). Consequently, none of the sets \( m_{i} + \Supp( u ) \) intersects the zero residue class modulo \( b_{0} \). Now, we define \( w_{u} \) as the word that is given by \( \Supp( w_{u} ) \DefAs \bigcup\nolimits_{i=1}^{b_{0}-N_{u}} ( m_{i} + \Supp( u ) )  \subseteq \{ k , \hdots , (b_{0}-1)P + k + \Length{u} - 1 \} \). In particular, \( w_{u} \) consists of \(  b_{0} - N_{u}  \) shifted copies of \( u \) at positions \( m_{i} \), which lie in all residue classes modulo \( b_{0} \) except \( -\Supp( u ) \). This implies that \( \Card{ \Supp( w_{u} ) } \leq ( b_{0} - N_{u} ) N_{u} \). Now, we check that \( w_{u} \) is \( \Bset \)-admissible:
\begin{itemize}
\item{Since none of the sets \( m_{i} + \Supp( u ) \) intersects the zero residue class modulo \( b_{0} \), also \( \Supp( w_{u} ) \) is not intersecting the zero residue class modulo \( b_{0} \).}
\item{Let \( b \in \Bset \, \setminus \{ b_{0} \} \) with \( b \leq L \). Since \( u \) is \( \Bset \)-admissible, there is a residue class modulo \( b \) that \( \Supp( u ) \) does not intersect. Since all positions of \( \Supp( u ) \) in \( \Supp( w_{u} ) \) differ by a multiple of \( b \), there is a residue class modulo \( b \) that \( \Supp( w_{u} ) \) does not intersect.}
\item{Finally, let \( b > L \). As mentioned above, \( \Card{ \Supp( w_{u} ) } \leq ( b_{0} - N_{u} ) N_{u} < b \), so \( \Supp( w_{u} ) \) cannot intersect all residue classes modulo \( b \).}
\end{itemize} 
To show the remaining assertion, i.e.\ that \( \Supp( w_{u} ) \) intersects all residue classes modulo \( b_{0} \) except the zero class, note that for every \( j \in \Supp( u ) \), the set $j+M_{u}$ intersects by construction all residue classes except those in \( j - \Supp( u ) \). Therefore, if \( \Card{ \Supp( u ) } = 1 \), we are done (as $j-\Supp(u)=\{0\}$). For \( \Card{ \Supp( u ) } > 1 \), let \( j_{1} \) and \( j_{N_{u}} \) denote respectively the smallest and the largest element of \( \Supp( u ) \). Note that all elements in \( j_{1} - \Supp( u ) \) are non-positive, while all elements in \( j_{N_{u}} - \Supp( u ) \) are non-negative. Because of \( b_{0} > 2 \Length{u} \), the zero residue class is the only residue class that all sets \( \bigcup\nolimits_{i=1}^{b_{0}-N_{u}} ( j + m_{i} ) \) miss.
\end{proof}

\begin{Prop}
\label{prop:AutomMonoton}
Assume that \( \Bset \) is coprime and infinite. Let \( \tau \colon \BadmSub \to \BadmSub \) be a CA, given by the code \( \BlCode \colon \Alphab^{ \{ -\Wind , \hdots , \Wind \} } \to \Alphab \). Then there exists \( t \in \{ -\Wind , \hdots , \Wind \} \) such that \( \tau( x ) \leq \Shift^{t} x  \) holds for all \( x \in \BadmSub \).
\end{Prop}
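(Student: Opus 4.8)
The plan is to exhibit a single shift $t\in\{-\Wind,\ldots,\Wind\}$ such that $\BlCode(B)\le B(t)$ holds for every $\Bset$-admissible window $B\in\Alphab^{\{-\Wind,\ldots,\Wind\}}$, where I index windows so that $B(k)=x(n+k)$ and the output bit sits at the centre $k=0$. Granting this, $\tau(x)(n)=\BlCode(\Restr{x}{n-\Wind}{n+\Wind})\le x(n+t)=(\Shift^{t}x)(n)$ for all $x$ and $n$, which is the assertion. Equivalently, calling a $\Bset$-admissible $B$ with $\BlCode(B)=1$ an \emph{active} window, I must show that all active windows carry a $1$ at one common position $t$. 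As a normalization I first record that $\BlCode(0^{2\Wind+1})=0$: since $0^{\ZZ}$ is the unique $\Shift$-fixed point of $\BadmSub$ and $\tau$ commutes with $\Shift$, the point $\tau(0^{\ZZ})$ is $\Shift$-fixed, hence equals $0^{\ZZ}$.

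The engine for the local analysis is Lemma~\ref{lemma:ShiftedCopies}. Fix an active window $u$ (read as a length-$(2\Wind+1)$ word), choose $b_0\in\Bset$ with $b_0>2(2\Wind+1)$, and form the configuration $x=\CharFkt{\Supp(w_u)}\in\BadmSub$, which is admissible by the Lemma. Because the copies of $u$ sit at the positions $m_i\in M$, which are separated by multiples of the large number $P$ from the Lemma, each copy is \emph{isolated}: the $\Wind$-window centred inside it sees only that copy. Consequently the output of $\tau$ near each copy equals one fixed finite block $v\DefAs\tau(0^{\Wind}u\,0^{\Wind})$, and by disjointness together with $M\bmod b_0=\{0,\ldots,b_0-1\}\setminus(-\Supp(u))$ one computes
\[
\Supp(\tau(x))\bmod b_0=\bigcup\nolimits_{j\in\Supp(v)}\bigl(\{0,\ldots,b_0-1\}\setminus(j-\Supp(u))\bigr).
\]
Since $\tau(x)\in\BadmSub$ must omit some residue class modulo $b_0$, this union cannot be all of $\ZZ/b_0\ZZ$; hence $\bigcap_{j\in\Supp(v)}(j-\Supp(u))\neq\emptyset$, i.e.\ $\Supp(v)\subseteq\Supp(u)+r$ for some integer $r$. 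Thus, for every active $u$, the output block is contained in a single translate of the input support; as the central output bit gives $0\in\Supp(v)$, this forces $-r\in\Supp(u)$, that is $\BlCode(u)=1=u(-r)$, so $t\DefAs-r$ controls $u$.

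It remains to promote the a priori $u$-dependent shift $r$ to one uniform $t$. The idea is to rerun the superposition argument with $u$ replaced by arbitrarily long $\Bset$-admissible words: for such a word the admissibility of $\tau(x)$ again forces $\Supp(v)\subseteq\Supp(u)+r$ with a \emph{single} $r$ valid across the whole bulk of the word, so the controlling position is locally constant. Overlapping occurrences of long words, for instance inside the maximal point $\eta$ of Lemma~\ref{maksymalnosc} and its translates, and across admissible words glued through common long subwords, must then share the same $r$; this identifies one global $t$ and yields $\BlCode(B)\le B(t)$ for every admissible window $B$, as required.

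I expect the promotion in the last step to be the main obstacle: the per-word analysis only pins the shift relative to each word, and one must rule out that two active windows of $\BadmSub$ are controlled by different positions. The resolution hinges on the constancy of $r$ over long words (a consequence of there being a \emph{single} modular obstruction modulo $b_0$) together with a gluing argument showing that every admissible window embeds, compatibly, into long admissible words. The modular bookkeeping modulo $b_0$ and the isolation of copies both rely on the coprimality of $\Bset$ and on $b_0$ being large, exactly as furnished by Lemma~\ref{lemma:ShiftedCopies}.
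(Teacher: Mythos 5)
Your reduction (find one position $t$ with $\BlCode(B)\le B(t)$ for every admissible window $B$) and your use of Lemma~\ref{lemma:ShiftedCopies} for a single word follow the paper's setup, but there is a genuine gap exactly where you flag it: the ``promotion'' to a uniform $t$ is not a technical remainder, it \emph{is} the proposition. Note that what your per-window analysis extracts is vacuous: for a single active window $u$, the conclusion ``there is $r$ with $0\in\Supp(u)+r$, hence $u(-r)=1$'' says only that $\Supp(u)\neq\varnothing$, which is obvious once $\BlCode(0^{2\Wind+1})=0$. All the content lies in showing that one position works for \emph{every} active window simultaneously, i.e.\ that $\bigcap_{u\in U}\Supp(u)\neq\varnothing$ where $U=\BlCode^{-1}(1)$, and your sketch for this (constancy of $r$ over long words, gluing through common subwords, occurrences inside $\eta$) is not an argument. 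Invoking $\eta$ and Lemma~\ref{maksymalnosc} is moreover off target: the proposition lives on $\BadmSub$ with no assumption $\BfreeSub=\BadmSub$, so occurrences in $\eta$ need not account for all admissible windows. The paper settles uniformity head-on: assuming $\bigcap_{u\in U}\Supp(u)=\varnothing$, it fixes \emph{one} $b_0>4\Wind+2$ (possible since $U$ is finite and all its words have length $2\Wind+1$), takes the word $w_u$ of Lemma~\ref{lemma:ShiftedCopies} for every $u\in U$, and splices \emph{all} of them into a single admissible configuration $x$ by a Chinese Remainder Theorem placement. Then $\Supp(\tau(x))$ meets, modulo $b_0$, the set $\{0,\dots,b_0-1\}\setminus(-\Supp(u))$ for each $u\in U$; since the sets $\Supp(u)$ have empty intersection, these sets cover all residues, so $\tau(x)\notin\BadmSub$, a contradiction. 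If you want to salvage your route, this gluing of all $u\in U$ into one admissible point is precisely the construction you would have to supply.

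A secondary but real flaw: even your single-word step is not justified as written. Your exact formula for $\Supp(\tau(x))$ requires each copy of $u$ inside $w_u$ to be isolated by gaps longer than $2\Wind$, but Lemma~\ref{lemma:ShiftedCopies} only guarantees separation $P>\Length{u}$ between the starting positions $m_i$, so consecutive copies may be separated by a gap of a single $0$; windows near the edge of one copy can then see the next copy, and the output near a copy need not equal $\tau(0^{\Wind}u0^{\Wind})$. (Also, lifting the mod-$b_0$ containment $\Supp(v)\subseteq\Supp(u)+r$ to an integer shift needs $b_0>6\Wind$, not just $b_0>2(2\Wind+1)$, since the sets $j-\Supp(u)$ range over an interval of length about $6\Wind$.) The paper avoids both issues because it only needs a \emph{lower} bound on $\Supp(\tau(x))$: at the center of each copy the window of radius $\Wind$ coincides exactly with that copy's extent, so the code fires there, and no control of the remaining output positions is required.
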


\begin{proof}
If \( \BlCode \) maps every word of length \( 2 \Wind +1 \) to \( 0 \), then \( \tau( x ) = 0^\Z \) for every \( x \in \BadmSub \) and the claim is trivially true. Thus, we may assume that the set \( U \DefAs \BlCode^{-1}( 1 ) \subseteq \Alphab^{ \{ -\Wind , \hdots , \Wind \} } \) is non-empty. For every \( u \in U \), we write \( J_{u} \DefAs \Supp( u ) \subseteq \{ -\Wind , \hdots , \Wind \} \) for the support and \( N_{u} \DefAs \Card{ J_{u} } \) for its cardinality.

We will prove \( \bigcap\nolimits_{ u \in U } J_{u} \neq \varnothing \). Then the assertion follows easily: let \( t \in \bigcap\nolimits_{ u \in U } J_{u} \). If \( \tau( x )( j ) = 1 \) for some \( x \in \BadmSub \) and \( j \in \ZZ \), then \( \Restr{ x }{ j - \Wind }{ j + \Wind } \in U \) holds, which implies \( x( j+t) = 1 \). Thus, we obtain \( \tau( x ) \leq \Shift^{t} x \).

To prove that the intersection is non-empty, we proceed by contradiction, so let us assume \( \bigcap\nolimits_{ u \in U } J_{u} = \varnothing \). We fix \( b_{0} \in \Bset \) with \( b_{0} > 4 \Wind +2 \). Thus, \( b_{0} \) is large enough that we can apply Lemma~\ref{lemma:ShiftedCopies} to words \( u \in \{ 0 , 1 \}^{\{ - \Wind , \hdots , \Wind \}} \), that is, with \( k=-\ell \) and \( \Length{ u } = 2 \Wind +1 \). For every \( u \in U \), let \( w_{u} \) denote the \( \Bset \)-admissible word constructed in Lemma~\ref{lemma:ShiftedCopies}, which contains \( b_{0} - N_{u} \) occurrences of \( u \), starting in pairwise different residue classes modulo \( b_{0} \). Now, we arrange all the \( w_{u} \)'s in a single element \( x \in \BadmSub \). Then we show that the support of \( \tau( x ) \) intersects every residue class modulo \( b_{0} \). Hence \( \tau( x ) \) is not \( \Bset \)-admissible, which yields a contradiction.

We write \( \Bset \, \setminus \{ b_{0} \} = \{ b_{1} , b_{2} , b_{3} , \hdots \} \), where the elements \( b_{k} \) are in increasing order. Let \( K \) be such that \( b_{K+1} > \max( \Card{U} \cdot ( b_{0} - (2\ell+1))(2\ell+1) , b_{0} ) \) holds. For every \( u \in U \), the associated word \( w_{ u } \) from Lemma~\ref{lemma:ShiftedCopies} with \( \Supp( w_{u} )  \subseteq \{ -\Wind , \hdots , (b_{0}-1)P + \Wind \} \)  is \( \Bset \)-admissible. Hence, for every \( b_{i} \in \Bset \) there is at least one residue class \( r_{u, i} \) modulo \( b_{i} \) that \( \Supp( w_{u} ) \) misses. By the Chinese Remainder Theorem, there exists \( s_{u} \in \NN \) which solves \( s_{u} + r_{u, i} \equiv 0 \bmod b_{i} \) simultaneously for $i=0,1,\ldots,K$. If we define \( \Restr{ x }{ k_{u} \cdot \prod_{i=0}^{K} b_{i} + s_{u} - \Wind }{ k_{u} \cdot \prod_{i=0}^{K} b_{i} + s_{u} - \Wind + \Length{ w_{u} } - 1 } \DefAs w_{u} \) for some \( k_{u} \in \NN \), then the support of \( x \) in this interval misses the zero residue class modulo \( b_{i} \) for all \( i \in \{ 0 , \hdots , K \} \). We do this for every \( u \in U \), so that \( x \) contains an occurrence of each \( w_{u} \). This yields that \( \Card{ \Supp( x ) } \leq \Card{ U } \cdot ( b_{0} - N_{u} )N_{u} \) and since \( \frac{ b_{0} }{ 2 } > 2\ell + 1 \geq N_{u} \), this is bounded from above by \( \Card{ U } \cdot ( b_{0} - (2\ell + 1) )(2\ell + 1) \). Hence, the sequence \( x \) is \( \Bset \)-admissible, since its support misses the zero residue class for \( b_{0} , \hdots , b_{K} \) by construction and misses some residue class for each \( b \geq b_{K+1} > \Card{U} \cdot ( b_{0} - (2\ell+1))(2\ell+1) \) due to having at most \( \Card{U} \cdot ( b_{0} - (2\ell+1)) (2\ell+1) \) elements.

By the construction in Lemma~\ref{lemma:ShiftedCopies}, \( w_{ u } \) contains copies of \( u \) at \( \{ m_{i}-\Wind , \hdots , m_{i} + \Wind \} \), that is,  copies of \( u \) centered at \( \{ m_{u, 1} , \hdots , m_{u, b_{0}-N_{u}} \} \AsDef M_{u} \) with \( M_{u} \bmod b_{0} = \{ 0 , \hdots , b_{0}-1 \} \setminus (-J_{u}) \). Moreover, \( \Supp( w_{u} ) \) misses only the zero residue class modulo \( b_{0} \), which implies \( s_{u} \equiv 0 \bmod b_{0} \). Since \( x \) contains \( w_{u} \) starting at position \( k_{u} \cdot \prod_{i=0}^{K} b_{i} + s_{u} - \Wind \), it contains \( u \) centered at positions \( k_{u} \cdot \prod_{i=0}^{K} b_{i} + s_{u} + m_{u, i} \), that is, in the residue classes \( M_{u} \bmod b_{0}\). Because of \( u \in U \), it follows that \( \tau( x ) \) has (mod~$b_0$) \( 1 \)'s in \( \{ 0 , \hdots , b_{0}-1 \} \setminus (-J_{u}) \). Since we assumed \( \bigcap\nolimits_{ u \in U } J_{u} = \varnothing \) and \( b_{0} > 2 \Wind +1 \), the support of \( \tau( x ) \) intersects every residue class modulo \( b_{0} \).
\end{proof}

\subsection{Proof of the Moore property}

\begin{Prop}\label{prop:Daniel}
Assume that \( \BfreeSub = \BadmSub \). Let \( \tau \colon \BfreeSub \to \BfreeSub \) be a surjective CA, given by the code \( \BlCode \colon \Alphab^{ \{ -\Wind , \hdots , \Wind \} } \to \Alphab \). Then there exists \( t \in \{ -\Wind , \hdots , \Wind \} \) with \( \tau = \Shift^{t} \). In particular, $(X_\eta,S)$ satisfies the Moore property.
\end{Prop}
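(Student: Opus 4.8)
The plan is to combine the monotonicity statement of Proposition~\ref{prop:AutomMonoton} with the maximality property of $\eta$ from Lemma~\ref{maksymalnosc}. First I would verify that the hypotheses of Proposition~\ref{prop:AutomMonoton} are available under the standing assumption $X_\eta = X_{\mathscr{B}}$. The equality forces $\mathscr{B}$ to be coprime \cite{Ka-Le}. Moreover, since $X_{\mathscr{B}}$ is hereditary it contains $0^{\Z}$ (the empty support is trivially $\mathscr{B}$-admissible), so $0^{\Z}\in X_\eta$, which makes $(X_\eta,S)$ proximal; as $\mathscr{B}$ is coprime, proximality is equivalent to $\mathscr{B}$ possessing an infinite coprime subset, hence $\mathscr{B}$ is infinite. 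Thus Proposition~\ref{prop:AutomMonoton} applies and yields some $t \in \{-\ell,\ldots,\ell\}$ with $\tau(x) \leq S^{t} x$ for all $x \in X_\eta$.

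Next I would pass to the normalized map $\sigma \DefAs S^{-t}\circ\tau$. Since $S^{-t}$ is a shift-commuting homeomorphism of $X_\eta$ and $\tau$ is a surjective CA, the composition $\sigma$ is again a surjective CA (it commutes with $S$ and is continuous). Applying $S^{-t}$ coordinatewise turns $\tau(x)\leq S^{t}x$ into $\sigma(x)\leq x$ for every $x\in X_\eta$, so the target conclusion $\tau = S^{t}$ becomes equivalent to $\sigma = \mathrm{Id}$.

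The crux is to evaluate $\sigma$ at the distinguished point $\eta$. By surjectivity there is some $x_{0}\in X_\eta$ with $\sigma(x_{0})=\eta$; monotonicity then gives $\eta=\sigma(x_{0})\leq x_{0}$, and Lemma~\ref{maksymalnosc} forces $x_{0}=\eta$, so that $\sigma(\eta)=\eta$. Because $\sigma$ commutes with $S$, it fixes the entire orbit $\{S^{r}\eta:r\in\Z\}$, which is dense in $X_\eta$ by the very definition $X_\eta=\ov{\{S^{r}\eta:r\in\Z\}}$; continuity of $\sigma$ then upgrades this to $\sigma=\mathrm{Id}$ on all of $X_\eta$, i.e.\ $\tau=S^{t}$. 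Finally, $S^{t}$ is bijective, hence a fortiori injective and thus pre-injective, so surjectivity of the arbitrary CA $\tau$ has been shown to imply pre-injectivity; this is precisely the Moore property.

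I do not anticipate a genuine obstacle at this stage, since the heavy lifting already resides in the combinatorial admissibility construction behind Proposition~\ref{prop:AutomMonoton} and in the maximality Lemma~\ref{maksymalnosc}. The only point demanding a little care is the opening verification that $X_\eta=X_{\mathscr{B}}$ really places us in the coprime-and-infinite regime required to invoke the monotonicity result; once that is settled, the remaining maximality-plus-density argument is short and self-contained.
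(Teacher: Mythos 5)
Your proof is correct and follows essentially the same route as the paper's: both invoke Proposition~\ref{prop:AutomMonoton} (after checking that $X_\eta=X_{\mathscr{B}}$ forces $\mathscr{B}$ to be coprime and infinite), apply Lemma~\ref{maksymalnosc} to a $\tau$-preimage of $\eta$, and conclude via the density of the orbit of $\eta$ and continuity. The only cosmetic differences are your normalization $\sigma=S^{-t}\circ\tau$ (the paper works with $\tau$ directly) and your justification that $\mathscr{B}$ is infinite via the proximality characterization $0^\Z\in X_\eta$, where the paper instead notes that a finite $\mathscr{B}$ would make $X_\eta$ finite while $X_{\mathscr{B}}$ is always uncountable.
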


\begin{proof}
Since \( \tau \) is surjective, there exists \( z \in \BfreeSub \) with \( \tau( z ) = \Bseq \).  Since $X_\eta=X_{\mathscr{B}}$, the set $\mathscr{B}$ is coprime by \cite{Ka-Le}. Moreover, \( \Bset \) is infinite, since otherwise the periodicity of \( \Bseq \) would imply a finite set \( \BfreeSub \), while \( \BadmSub \) is always uncountable. Hence Proposition~\ref{prop:AutomMonoton} yields \( \Bseq = \tau( z ) \leq \Shift^{t} z \) for some \( t \in \{ -\ell , \hdots , \ell \} \). Since \( \Bseq \) is a maximal element in \( \BfreeSub \) by Lemma~\ref{maksymalnosc}, we obtain \( \Bseq = \Shift^{t} z \). Moreover, \( \tau \) commutes with the shift, which yields \( \tau( \Bseq ) = \tau( \Shift^{t} z ) = \Shift^{t} \tau( z ) = \Shift^{t} \Bseq \). Since $\eta$ is a transitive point, the result follows.
\end{proof}

\begin{Remark} The above corollary applies to the Erdös case, where  the equality $X_\eta=X_{\mathscr{B}}$ has been established \cite[Cor.~4.2]{Ab-Le-Ru}. But there are also coprime Behrend sets for which we have this equality \cite{Ka-Le}.
\end{Remark}

\section{Hereditary subshifts and the Myhill property}\label{sec:myhill}

\subsection{A discussion of pre-injectivity}

The following weakening of injectivity due to Gromov is standard in the theory of cellular automata \cite{Ce-Co}.

\begin{Def}
A CA $\tau$ on a subshift $(X,S)$ is \textit{pre-injective} if, for any $x,y\in X$, the property $\tau(x)=\tau(y)$ implies that either $x=y$ or that $x$ differs from $y$ at infinitely many coordinates.
\end{Def}

In order to get acquainted with the notion of pre-injectivity, consider the following two auxiliary results. The first one addresses the interplay between (pre-)injectivity and monotonicity.

\begin{Lemma}\label{lem:X0}
Consider some hereditary set $X_0\subset \{0,1\}^\Z$ of $0-1$-sequences having finite support.  Assume that $\tau: X_0\to \{0,1\}^\Z$ is injective and monotone, i.e., $\tau(x)\le x$ coordinatewise for any $x\in X_0$. Then $\tau$ is the identity map.
\end{Lemma}

\begin{proof}
As $X_0$ is hereditary and as $\tau$ is monotone, we have $\tau(X_0)\subset X_0$. We can thus consider iterates of $\tau$. Consider any $x\in X_0$. As $x$ has at most finitely many nonzero coordinates and as $\tau$ is monotone, there exists $n\in \mathbb N$ such that  $\tau^{n+1}(x)=\tau^n(x)$. Now, the  injectivity of $\tau$ yields $\tau(x)=x$.
\end{proof}

The next result is probably well-known. As we were not able to trace a reference which applies to our setting, we also provide its proof. We say that $\tau:X\to Y$ is bounded-to-one if there exists a finite constant $B$ such that $|\tau^{-1}(\{y\})|\le B$ for all $y\in Y$.

\begin{Lemma}\label{l:goe1}
Let $(X,S)$ be a hereditary subshift with a CA $\tau$. If $\tau$ is pre-injective, then $\tau$ is bounded-to-one.
\end{Lemma}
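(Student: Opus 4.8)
The plan is to prove the contrapositive: if $\tau$ is not bounded-to-one, then $\tau$ is not pre-injective. So I assume that for every $B\in\N$ there exists $y_B\in X$ whose fiber $\tau^{-1}(\{y_B\})$ contains more than $B$ distinct points. The goal is to manufacture two distinct points $x,x'\in X$ with $\tau(x)=\tau(x')$ that differ in only finitely many coordinates, contradicting pre-injectivity.

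First I would exploit the code structure. By the CLH theorem, $\tau$ is given by a sliding block code $\phi$ of some window radius $\ell$, so whether $\tau(x)(n)=1$ depends only on $x(n-\ell,n+\ell)$. Fix a large fiber, i.e.\ a point $y=y_B$ with many preimages $x^{(1)},\dots,x^{(B+1)}$. The key observation is that a sliding block code of radius $\ell$ acting on words of length $L$ produces an output window of length $L-2\ell$; a counting argument on the number of admissible input windows of bounded length that all map to the same output should show that large fibers force the existence of two preimages that agree outside a bounded central region. Concretely, I would look at restrictions of the preimages to a long but finite window $[-N,N]$: if the fiber is large, by pigeonhole two of the preimages must agree on the two flanking zones $[-N,-N+2\ell]$ and $[N-2\ell,N]$ (there are only finitely many possible patterns there) while differing somewhere in the middle.

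The heart of the argument is a splicing (cut-and-paste) step. Suppose $x$ and $x'$ are two preimages of $y$ that agree on a pair of buffer windows of width $2\ell$ straddling positions $\pm N$. Then I define a new point $\tilde x$ by copying the central block of $x'$ into $x$ between these buffers and keeping $x$ outside. Because the two configurations agree on the buffer zones of width $2\ell$ — which is exactly the width needed for the code $\phi$ to ``see across'' the seam — the output $\tau(\tilde x)$ agrees with $\tau(x)=y$ everywhere: each output coordinate reads an input window of radius $\ell$ that lies entirely in a region where $\tilde x$ coincides either with $x$ or with $x'$, both of which map to $y$ there. Thus $\tau(\tilde x)=y=\tau(x)$, while $\tilde x\neq x$ (they differ inside the middle, where $x$ and $x'$ differed) and $\tilde x$ differs from $x$ in only finitely many coordinates. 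This violates pre-injectivity. Here is where \emph{heredity} enters: the spliced word $\tilde x$ need not a priori belong to $X$, but since $\tilde x$ is dominated coordinatewise by the coordinatewise maximum of the two preimages — or, more simply, since the central block of $x'$ together with the buffers already occurs inside $x'\in X$ and the surrounding zeros can be filled in — heredity guarantees $\tilde x\in X$. I would verify membership carefully, as this is the step most likely to need the hereditary hypothesis in an essential way.

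The main obstacle I anticipate is controlling the combinatorics so that the two chosen preimages genuinely differ in the central region \emph{after} being forced to agree on both buffers, rather than differing only inside the buffers themselves. To handle this cleanly I would make the buffer zones short (width exactly $2\ell$) and the central region long, choose $N$ growing while the fiber size grows, and argue that if every pair of preimages that agrees on both buffers also agrees in the middle, then the fiber size is bounded by the number of buffer-pattern pairs, which is at most $4^{2\ell}\cdot 4^{2\ell}$ — contradicting unboundedness. This pigeonhole bound is exactly what converts ``unbounded fibers'' into ``a spliceable pair,'' and closing the quantifiers in the right order is the delicate part; the splicing and the heredity check are then routine.
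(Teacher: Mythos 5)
Your core idea --- force two preimages of the same point to agree on buffer windows flanking a disagreement, then splice and invoke heredity --- is also the engine of the paper's proof, but as written your argument has a genuine gap at exactly the step you flagged, and your proposed justifications for it are both incorrect. Heredity is a \emph{downward} closure: $y\in X$ whenever $y\le x$ coordinatewise for some $x\in X$; it never licenses pasting a block of one point of $X$ into another. Your spliced point $\tilde x$ (equal to $x'$ in the middle, to $x$ outside) does satisfy $\tilde x\le x\vee x'$, but the coordinatewise maximum of two elements of a hereditary subshift need not lie in $X$, so this proves nothing: in the hereditary subshift consisting of all sequences with at most one $1$, the maximum of two distinct points has two $1\!$'s and lies outside the subshift, and in a $\mathscr{B}$-free subshift the union of two admissible supports is in general not admissible. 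Your second justification describes a different point --- $x'$ on the middle with \emph{zeros} outside --- not your $\tilde x$, which equals $x$ outside. The repair is to abandon cross-splicing: with $D=\{-N,\dots,N\}$, pass to the pair $x\cdot\raz_D$ and $x'\cdot\raz_D$. These \emph{are} in $X$ by heredity (they sit below $x$ and $x'$ respectively), they still differ at any disagreement position strictly inside $D$, they differ at only finitely many coordinates (both vanish off $D$), and the buffer agreement together with $\tau(x)=\tau(x')$ forces $\tau(x\cdot\raz_D)=\tau(x'\cdot\raz_D)$; this ``multiply by $\raz_D$'' move is precisely what the paper does.

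There is a second, more minor, gap: your pigeonhole is misquantified. Two preimages agreeing on both buffers may differ only \emph{outside} $[-N,N]$, making the splice trivial, and your fallback claim --- that if buffer agreement always implied middle agreement then the fiber would have at most $4^{2\ell}\cdot4^{2\ell}$ elements --- is false, because preimages agreeing on all of $[-N,N]$ can still be distinct. The fix is to reverse the order of choices: take $K>2^{2(2\ell+1)}$ distinct preimages of a single point, choose $N$ so large that each of the finitely many pairs among them already differs at some position of modulus less than $N-2\ell$, and only then apply pigeonhole to the buffer patterns; the resulting pair agrees on both buffers and must differ strictly inside. With these two repairs your contrapositive argument is correct, and it is then a legitimate variant of the paper's proof: the paper shows (via the same zero-splice contradiction with pre-injectivity) that any two distinct preimages of a point must stay $\delta$-separated for all large times in at least one time direction, and converts this into a uniform bound on fiber cardinality by a compactness argument, whereas your pigeonhole on boundary patterns replaces that compactness step.
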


\begin{proof}
Recall that if we consider distinct $x,y\in X$ such that $\tau(x)=\tau(y)$ then by pre-injectivity assumption, $x$ and $y$ differ at infinitely many positions.

\begin{quote}
We claim that there exists a universal constant $\delta>0$ independent of $x,y$ with the following property: If $x$ and $y$ differ at infinitely many positive positions, we have $d(S^{r}x,S^{r}y)\geq \delta$ for all but finitely many $r\in\mathbb N$. If $x$ and $y$ differ at infinitely many negative positions, we have $d(S^{-r}x,S^{-r}y)\geq \delta$ for all but finitely many $r\in\mathbb N$.
\end{quote}

\noindent The above claim implies the statement of the lemma by the following argument. Fix any $x\in X$ and assume that  $\tau^{-1}(\{x\})$ contains $N$ elements  $\{x_1, \ldots, x_N\}$. As $X$ is compact, we find $\{z_1^+, z_1^-,\ldots, z_N^+, z_N^-\}\subset X$ and an increasing sequence $(r_k)_{k\in \mathbb N}$  of natural numbers such that $S^{r_k}x_i\to z_i^+$ and $S^{-r_k}x_i\to z_i^-$ for all $i$. This may be seen by an iterative argument taking subsequences. Now, for any  $i\neq j$ the claim implies $d(z_i^+, z_j^+)\ge \delta$ or $d(z_i^-, z_j^-)\ge \delta$, by the
continuity of the metric. Thus by the compactness of $X$, the number $N$ cannot be arbitrarily large and is bounded uniformly in $X$. Hence $|\tau^{-1}(\{x\})|$ can be bounded by some finite number that does not depend on the choice of $x$.

\smallskip

\noindent  As the map $\tau$ is a CA, it can be described by a code of length $\ell\ge0$ as in \eqref{clh}.
Choose  $\delta_\ell>0$ sufficiently small such that for $x,y\in X$ we have  $x(-4\ell,4\ell)=y(-4\ell,4\ell)$ whenever $d(x,y)<\delta_{\ell}$. We prove the above claim for $\delta=\delta_\ell$ by an indirect argument. Consider distinct $x,y\in X$ such that $\tau(x)=\tau(y)$. Assume without loss of generality that $x$ and $y$ differ at infinitely many positive positions. In order to argue for contradiction, assume that there exists an increasing sequence of natural numbers $(r_k)_{k\in \mathbb N}$ such that $d(S^{r_k}x,S^{r_k}y)<\delta_\ell$ for every $k\in\mathbb N$. By our choice of $\delta_\ell$, we then have
\beq\label{eq:equal}
x(r_k-4\ell,r_k+4\ell)=y(r_k-4\ell,r_k+4\ell)
\eeq
for every $k\in \mathbb N$. As $x$ and $y$ differ at infinitely many positive positions, we may select $k_0<k_1$ such that
\beq\label{eq:unequal}
x(r_{k_0}-4\ell,r_{k_1}+4\ell)\neq y(r_{k_0}-4\ell,r_{k_1}+4\ell).
\eeq
Let $D=\{r_{k_0}-4\ell,\ldots, r_{k_1}+4\ell\}$ and define $x',y'\in \{0,1\}^{\Z}$ by $x'=x\cdot \raz_D$ and $y'=y\cdot \raz_D$, the dot denoting coordinatewise multiplication. Then $x'$ and $y'$ differ due to \eqref{eq:unequal}, but they differ at finitely many positions only. Moreover, $x', y'\in X$ by heredity.  On the other hand we have $\tau(x')=\tau(y')$, which contradicts the pre-injectivity of $\tau$. To see the latter, consider $n\in \{r_{k_0}-2\ell, \ldots r_{k_1}+2\ell\}$ and note $\tau(x')(n)=\tau(x)(n)$ and $\tau(y')(n)=\tau(y)(n)$ as the code has length $\ell$. Thus $\tau(x')(n)=\tau(y')(n)$ due to $\tau(x)=\tau(y)$. For $n\notin \{r_{k_0}-2\ell, \ldots, r_{k_1}+2\ell\}$, note that $\tau(x')(n)$ and $\tau(y')(n)$ are both coded by the same interval by \eqref{eq:equal}. Hence $\tau(x')(n)=\tau(y')(n)$. Altogether, we obtain $\tau(x')=\tau(y')$.
\end{proof}

\subsection{Myhill property on classes of hereditary subshifts}

The above two lemmata are crucial ingredients for a proof of the Myhill property. Our first result is based on Lemma~\ref{lem:X0} and uses monotonicity.

\begin{Prop}\label{prop:Myhill1}
If $X_\eta=X_{\mathscr{B}}$, the $\mathscr{B}$-free subshift $(X_\eta,S)$ satisfies the Myhill property.
\end{Prop}

\begin{proof}
Assume that $\tau$ is a pre-injective CA on $X_\eta$. As $X_\eta=X_{\mathscr{B}}$, the set $\mathscr{B}$ is infinite and coprime  by \cite{Ka-Le}. Hence $\tau\circ S^{t}$ is monotone by Proposition~\ref{prop:AutomMonoton} for some integer $t$. Also note that $\tau\circ S^{t}$ inherits pre-injectivity from $\tau$.
Denote by $X_0\subset X_\mathscr{B}$ the collection of all $0-1$-sequences in $X_\mathscr{B}$ of finite support. Then $\tau\circ S^{t}$ is the identity on $X_0$ by  Lemma~\ref{lem:X0}. Finally, note that $X_0$ is dense in $X_\mathscr{B}$.  As $X_\eta=X_{\mathscr{B}}$, it is also dense in $X_\eta$.  By the continuity of $\tau$ we thus get that $\tau\circ S^{t}$ is the identity on $X_\eta$. In particular, $\tau$ is surjective.%
\footnote{Recall that $X_\eta=X_{\mathscr{B}}$ also for $\mathscr{B}=\{1\}$, in which case the assertion holds trivially.}
\end{proof}

The previous result applies in particular to the Erdös case. We have the following version for hereditary subshifts, where we use Lemma~\ref{l:goe1}, which does not resort to monotonicity.

\begin{Prop}\label{t:goe1}
Let $(X,S)$ be a hereditary subshift which is intrinsically ergodic and whose measure of maximal entropy has full topological support. Then $(X,S)$ has the Myhill property.
\end{Prop}

\begin{proof}
Let $\tau$ be a pre-injective CA on $(X,S)$ and suppose $Y:=\tau(X)\subset X$ with $Y\neq X$. Then $(Y,S)$ is a subsystem of $(X,S)$ satisfying $h(Y,S)=h(X,S)$. The latter statement uses the variational principle and the fact that all fibers $\tau^{-1}(\{x\})$ have finite cardinality by Lemma~\ref{l:goe1}. As $(Y,S)$ is a subshift, we may take a measure $\nu$ on $Y$ of maximal entropy. Then $\nu$ is also a measure of maximal entropy for $(X,S)$. But by assumption $\nu$ has full support on $X$, which is in conflict with $Y\neq X$.
\end{proof}

Now, notice that if $\nu_\eta$ has full support, then also $\nu_\eta\ast B(\frac12,\frac12)$ has full support in $\widetilde{X}_\eta$ (cf.\ \cite{Ku-Le(jr)}) and therefore, in view of Proposition~\ref{t:goe1} and Theorems~\ref{atw4} and~\ref{atw1}, we obtain the following.

\begin{Cor}\label{c:goe1}
The Myhill property holds for $(\widetilde{X}_\eta,S)$ whenever $\mathscr{B}$ is taut. In particular, it holds in the Erdös case. \qed
\end{Cor}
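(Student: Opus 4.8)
The plan is to verify the three hypotheses of Corollary~\ref{t:goe1} for the system $(\widetilde{X}_\eta,S)$ and then invoke it. First, $\widetilde{X}_\eta$ is hereditary by the very definition of the hereditary closure, so the first hypothesis holds for free. Next, Theorem~\ref{atw4} asserts that $(\widetilde{X}_\eta,S)$ is intrinsically ergodic, with measure of maximal entropy $\nu_\eta\ast B(\frac12,\frac12)$. Thus the only remaining point is to show that this measure has full topological support in $\widetilde{X}_\eta$.

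For the support, I would first exploit tautness: by Theorem~\ref{atw1}, tautness of $\mathscr{B}$ is equivalent to ${\rm supp}(\nu_\eta)=X_\eta$, i.e.\ $\nu_\eta$ has full support on the $\mathscr{B}$-free subshift. It then remains to propagate this to full support of the convolution $\nu_\eta\ast B(\frac12,\frac12)$ on the larger set $\widetilde{X}_\eta$. The argument would run as follows: any cylinder determined by a block $B$ occurring in $\widetilde{X}_\eta$ is, by definition of the hereditary closure, dominated coordinatewise by a block $B'$ occurring in some $x\in X_\eta$. Since $\nu_\eta$ has full support, the set of $x$ realizing $B'$ in that window has positive $\nu_\eta$-measure; and the Bernoulli factor $B(\frac12,\frac12)$ assigns positive probability to the finitely many constraints on the multiplier needed to turn $B'$ into $B$ under coordinatewise multiplication (keep the $1$'s of $B$, delete the extra $1$'s of $B'$, while the $0$-positions of $B'$ are automatic). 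As coordinatewise multiplication pushes $\nu_\eta\otimes B(\frac12,\frac12)$ forward to $\nu_\eta\ast B(\frac12,\frac12)$, the cylinder of $B$ receives positive measure, which gives full support.

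With all three hypotheses of Corollary~\ref{t:goe1} in place, the Myhill property for $(\widetilde{X}_\eta,S)$ follows at once. Finally, the Erd\"os case is taut, since thinness implies tautness, so the general statement specialises to it; in the Erd\"os case one moreover has $X_\eta=\widetilde{X}_\eta$ (because $X_\eta=X_{\mathscr{B}}$ is already hereditary), so the Myhill property then holds for the $\mathscr{B}$-free subshift itself.

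The main obstacle, modest as it is, is the support-propagation step: one must check that full support of $\nu_\eta$ on $X_\eta$ transfers to full support of the image measure on the strictly larger hereditary closure $\widetilde{X}_\eta$. This is precisely the content flagged by the reference to \cite{Ku-Le(jr)}, and it is the only place where the explicit description of the measure of maximal entropy as a coordinatewise-multiplicative convolution is genuinely used.
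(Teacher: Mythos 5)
Your proof is correct and follows essentially the same route as the paper: check heredity of $\widetilde{X}_\eta$, invoke Theorem~\ref{atw4} for intrinsic ergodicity and the identification of the measure of maximal entropy, use Theorem~\ref{atw1} to get full support of $\nu_\eta$ from tautness, and then apply Corollary~\ref{t:goe1}. The only difference is that you spell out the support-propagation step showing $\nu_\eta\ast B(\frac12,\frac12)$ has full support in $\widetilde{X}_\eta$ (which the paper delegates to a citation of \cite{Ku-Le(jr)}), and your argument for that step is sound.
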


\begin{Remark} Apart from the Erdös case, there are more examples of hereditary systems for which the assumptions of Proposition~\ref{t:goe1} are satisfied. For instance, they are satisfied for the hereditary closures of Sturmian sequences \cite[Sec.~4]{Ku-Le-We}. In particular, the Myhill property holds for such closures.
\end{Remark}


\subsection{GoE theorem for \texorpdfstring{$X_\eta=X_\mathscr{B}$}{X\_eta = X\_B}}

The following result contains Theorem~\ref{t:main} from the introduction as a special case. Indeed, in the Erdös case we have $X_\eta=X_\mathscr{B}$, see \cite[Cor.~4.2]{Ab-Le-Ru}.\footnote{It has been proved in \cite{Ka-Le} that there are non-thin subsets $\mathscr{B}$ of primes numbers (such a $\mathscr{B}$ is Behrend) such that we also have $X_\eta=X_{\mathscr{B}}$.}

\begin{Th}\label{t:GoEforBehrend}
Assume that a $\mathscr B$-free subshift $(X_\eta,S)$ satisfies $X_\eta=X_\mathscr{B}$.  Then it satisfies the GoE theorem.  In fact, every CA is given by a monotone code, modulo a power of the shift. If the CA is onto, then it equals a power of the shift.
\end{Th}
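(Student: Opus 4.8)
The plan is to assemble Theorem~\ref{t:GoEforBehrend} from the pieces already proved in the excerpt, so that very little new work is required. The hypothesis is $X_\eta=X_{\mathscr B}$. As remarked after Proposition~\ref{prop:Daniel}, this equality forces $\mathscr B$ to be coprime by \cite{Ka-Le}, and $\mathscr B$ must be infinite, since a finite $\mathscr B$ would make $\eta$ periodic and $X_\eta$ finite, whereas $X_{\mathscr B}$ is always uncountable. Thus we are exactly in the situation where Proposition~\ref{prop:AutomMonoton} applies: every CA $\tau$ on $X_\eta=X_{\mathscr B}$ satisfies $\tau(x)\le S^t x$ for all $x$, for some fixed $t\in\{-\ell,\dots,\ell\}$.

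First I would establish the claim that \emph{every CA is given by a monotone code, modulo a power of the shift}. By Proposition~\ref{prop:AutomMonoton} there is $t$ with $\tau(x)\le S^t x$ for all $x\in X_\eta$. Consider the composed CA $\tau'\DefAs S^{-t}\circ\tau$. Then $\tau'(x)\le x$ coordinatewise for all $x$, i.e.\ $\tau'$ is monotone. Since $\tau'$ is itself a CA (a composition of CA), the CLH theorem~\eqref{clh} gives it a sliding block code $\phi'$, and the pointwise inequality $\tau'(x)(0)\le x(0)$ translates into the monotonicity $\phi'(B)\le B(0)$ on every block $B$ appearing in some $x\in X_\eta$. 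Hence $\tau=S^t\circ\tau'$ is a power of the shift composed with a monotone-coded CA, which is precisely the assertion.

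Next I would prove the GoE equivalence (surjectivity $\Leftrightarrow$ pre-injectivity). The Moore direction (surjective $\Rightarrow$ pre-injective) is immediate from Proposition~\ref{prop:Daniel}: under $X_\eta=X_{\mathscr B}$, every surjective CA equals a power $S^t$ of the shift, and powers of the shift are injective, hence a fortiori pre-injective. For the Myhill direction (pre-injective $\Rightarrow$ surjective), I would invoke Proposition~\ref{prop:Myhill1}, whose hypotheses ($\mathscr B$ primitive and infinite, $X_\eta=X_{\mathscr B}$) are all met here; it states precisely that $(X_\eta,S)$ has the Myhill property. Combining the two directions yields the GoE theorem.

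Finally, the statement \emph{if the CA is onto, then it equals a power of the shift} is exactly the content of Proposition~\ref{prop:Daniel} under the standing hypothesis $X_\eta=X_{\mathscr B}$, so it can be quoted directly. I expect no genuine obstacle in this theorem: it is a packaging result, and the real technical content lives upstream in Lemma~\ref{lemma:ShiftedCopies}, Proposition~\ref{prop:AutomMonoton}, Proposition~\ref{prop:Daniel} and Proposition~\ref{prop:Myhill1}. The only point deserving a moment's care is the bookkeeping on the shift exponent $t$: one must make sure that the same $t$ furnished by Proposition~\ref{prop:AutomMonoton} is the one used in defining $\tau'=S^{-t}\circ\tau$, and that conjugating/composing by powers of $S$ preserves the CA property and the code length, which is standard.
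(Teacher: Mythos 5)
Your proposal is correct and follows essentially the same route as the paper: the paper's own proof is exactly this assembly, citing Proposition~\ref{prop:Daniel} for the Moore property (and the ``onto implies shift power'' claim) and Proposition~\ref{prop:Myhill1} (which rests on Proposition~\ref{prop:AutomMonoton}) for the Myhill property and monotonicity. Your write-up is merely more explicit than the paper's two-line proof, in particular in spelling out why $X_\eta=X_{\mathscr B}$ forces $\mathscr B$ to be coprime and infinite, and in deriving the monotone code via $\tau'=S^{-t}\circ\tau$ --- details the paper leaves implicit inside the proofs of the cited propositions.
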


\begin{proof}
The Moore property has been proved in Proposition~\ref{prop:Daniel}. The Myhill property and the remaining statements follow from Proposition~\ref{prop:Myhill1}.
\end{proof}

\section{The automorphism group is trivial for hereditary \texorpdfstring{$\mathscr{B}$}{B}-free systems} \label{s:mentzen}

\subsection{Some facts about hereditary subshifts}

Assume that $X\subset \{0,1\}^{\Z}$ is a hereditary subshift.
Assume additionally that $(X,S)$ is transitive and let the orbit $(S^nz)_{n\in\Z}$ of $z\in X$ be dense in $X$. We will constantly assume that $z$ has infinite support (otherwise the dynamics of $(X,S)$ is very simple: it consists of finitely many orbits attracted by a fixed point).
Given a non-negative $a\in\Z$ and a block $B \in \{0,1\}^{\{-a,\ldots,a\}}$, we say that this block {\em appears centrally in} $z$ {\em at position} $m$ if $z(m+n)=B(n)$ for each $n\in [-a,a]$. It follows that the set of those $m$ such that $B$ appears centrally in $z$ at position $m$ must be infinite. Indeed, let $z(m+n)=B(n)$ for each $n\in [-a,a]$. Since $z$ has infinite support, there exists $j\geq1$ such that for some $s\in[-(a+j),a+j]\setminus[-a,a]$, we have $z(m+s)=1$. Now define a new block $C\in\{0,1\}^{\{-(a+j),\ldots,a+j\}}$ arising from $z(m-(a+j),m+a+j)$  by replacing all the 1's at positions in $[m-(a+j),m+a+j]\setminus[-a,a]$ by the 0's. By the heredity of $X$, it follows that the block $C$ has to appear (centrally) in $z$ at a certain position $m'$ with necessarily $m'\neq m$. Now, repeat the same reasoning with $B$ replaced by $C$, etc. In this way, we obtain an infinite sequence of different positions of (central) appearances of the original block $B$ in $z$.

\smallskip

Assume additionally that the  point $z\in X$ is  maximal, that is: for each $x\in X$, if $z\leq x$ then $z=x$.  Note that whenever $z$ is a transitive maximal point and $\tau\in {\rm Aut}(X,S)$ satisfies $\tau(x)\leq x$ for each $x\in X$ then $\tau=Id$. Indeed, $z\leq \tau^{-1}(z)$, which means that $\tau(z)=z$ and $\tau=Id$ follows from the fact that $z$ is a transitive point.

\smallskip

Note also that the all zero sequence $0^\Z$  is a fixed point belonging to $X$. Unless $(X,S)$ is the full shift, $0^\Z$ is the only fixed point for $S$. Hence, assuming that $(X,S)$ is not the full shift, we must have
\beq\label{e2}
\tau( 0^\Z ) = 0^\Z\text{ for each }\tau\in {\rm Aut}(X,S).
\eeq
Given $\tau\in {\rm Aut}(X,S)$, let  $\phi:\{0,1\}^{\{-\ell,\ldots,\ell\}}\to\{0,1\}$ be the corresponding code
\beq\label{e3}
\tau(x)(n)=\phi(x(n-\ell,n+\ell))\text{ for each }n\in\Z.
\eeq
By an abuse of notation, we will also write that $\tau(x)=\phi(x)$ (or even $\phi(C)$, where $C$ is a block).
It follows from \eqref{e2} and~\eqref{e3} that $\phi(0^{\{-\ell,\ldots,\ell\}})=0$.

\subsection{The result and its proof}

The following result extends Mentzen's theorem beyond the Erdös case. It also extends the result from Theorem~\ref{t:GoEforBehrend} (for the invertible CAs) in the hereditary case $X_\eta=X_\mathscr{B}$.

\begin{Th}\label{t1}
Let $(X_\eta, S)$ be a $\mathscr B$-free subshift. If $X_\eta$ is hereditary then ${\rm Aut}(X_\eta,S)$ is trivial, that is, it equals $\{S^n:\:n\in\Z\}$.
\end{Th}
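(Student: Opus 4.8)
The plan is to reduce the statement to a single monotonicity assertion and then invoke the maximality of $\eta$. Fix $\tau\in{\rm Aut}(X_\eta,S)$ with code $\phi\colon\{0,1\}^{\{-\ell,\dots,\ell\}}\to\{0,1\}$ as in \eqref{e3}. First I record the structural facts that place us in the framework of Section~\ref{s:mentzen}: heredity of $X_\eta$ forces $\mathscr{B}$ to be infinite, since a finite primitive $\mathscr{B}$ makes $\eta$ periodic and $X_\eta$ a finite orbit, which is not hereditary (it does not contain $0^\Z$); hence $\eta$ is aperiodic with infinite support. Moreover $0^\Z\in X_\eta$ by heredity, so $(X_\eta,S)$ is proximal, and since $\mathscr{B}\neq\varnothing$ it is not the full shift. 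By Lemma~\ref{maksymalnosc} and the definition of $X_\eta$, the point $\eta$ is a transitive maximal point, so the observations collected in Section~\ref{s:mentzen} apply, in particular $\tau(0^\Z)=0^\Z$ and the relation \eqref{e4}.

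The decisive claim is the monotonicity statement: there exists $t\in\{-\ell,\dots,\ell\}$ such that $\tau(x)\leq S^t x$ for every $x\in X_\eta$. Granting this, $S^{-t}\tau\in{\rm Aut}(X_\eta,S)$ satisfies $(S^{-t}\tau)(x)\leq x$ for all $x$, and the maximality remark of Section~\ref{s:mentzen} (an automorphism lying pointwise below the identity on a transitive maximal point equals the identity) yields $S^{-t}\tau={\rm Id}$, i.e.\ $\tau=S^t$. This gives ${\rm Aut}(X_\eta,S)=\{S^n:n\in\Z\}$. Writing $U:=\phi^{-1}(1)$ for the set of blocks of length $2\ell+1$ occurring in $X_\eta$ that $\phi$ sends to $1$, the monotonicity statement is equivalent to $\bigcap_{u\in U}{\rm supp}(u)\neq\varnothing$: any $t$ in this intersection forces $\tau(x)(n)=1\Rightarrow x(n+t)=1$, that is $\tau(x)\leq S^t x$.

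It remains to prove $\bigcap_{u\in U}{\rm supp}(u)\neq\varnothing$, and this is where the arithmetic enters, along the lines of Proposition~\ref{prop:AutomMonoton}. Arguing by contradiction, assume the intersection is empty. Since $(X_\eta,S)$ is proximal, $\mathscr{B}$ contains an infinite coprime subset, from which I select a large modulus $b_0$ together with finitely many further coprime moduli. For each $u\in U$ I want to place, via the Chinese Remainder Theorem, shifted copies of $u$ whose centres meet exactly the residue classes $\{0,\dots,b_0-1\}\setminus(-{\rm supp}(u))$ modulo $b_0$; because $\bigcap_{u\in U}{\rm supp}(u)=\varnothing$, these classes exhaust $\{0,\dots,b_0-1\}$ as $u$ ranges over $U$, so a suitably assembled configuration $x$ has $\tau(x)$ meeting every residue class modulo $b_0$. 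As $\tau(x)\in X_\eta\subseteq X_{\mathscr{B}}$ is $\mathscr{B}$-admissible, this is the desired contradiction.

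The main obstacle is to carry out this construction inside the hereditary subshift $X_\eta$ rather than inside the larger admissible subshift $X_{\mathscr{B}}$, and for general primitive (possibly non-coprime) $\mathscr{B}$; unlike in the coincidence case $X_\eta=X_{\mathscr{B}}$ exploited in Proposition~\ref{prop:Daniel}, an arbitrary $\mathscr{B}$-admissible arrangement need not lie in $X_\eta$. I would overcome this by using heredity twice: each $u\in U$ occurs in $X_\eta$, hence is dominated coordinatewise by a genuine window $\widehat u$ of $\eta$; I would first arrange the windows $\widehat u$ at the prescribed positions using the arithmetic of the $\mathscr{B}$-free integers (the Chinese Remainder Theorem over an initial coprime segment of $\mathscr{B}$, with the sparse tail handled as in Lemma~\ref{lemma:ShiftedCopies}), thereby producing a point of $X_\eta$, and then delete the superfluous $1$'s to recover $u$ in the central $(2\ell+1)$-window at each placed position, the deletions remaining within $X_\eta$ by heredity and, for copies spaced more than $2\ell$ apart, not altering the outputs of $\tau$ at the relevant centres (extra $1$'s produced elsewhere can only help $\tau(x)$ cover more residue classes). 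Making the placement of the $\widehat u$ simultaneously admissible for all of $\mathscr{B}$ and genuinely realized inside the orbit closure of $\eta$ is the delicate point, and it is precisely here that one must use that $\eta$ is the $\mathscr{B}$-free indicator and not merely an admissible sequence.
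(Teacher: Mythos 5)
Your reduction of the theorem to a monotonicity statement plus the maximality of $\eta$ is correct, and it matches the endgame of the paper's proof (after normalisation the paper shows $\tau(x)\leq x$ for all $x$, then concludes via $\eta\leq\tau^{-1}(\eta)$ and Lemma~\ref{maksymalnosc}). The genuine gap is in the step you yourself flag as ``the delicate point'': your proof of $\bigcap_{u\in U}{\rm supp}(u)\neq\varnothing$ transplants the covering argument of Proposition~\ref{prop:AutomMonoton}, and that argument is simply not available in a general hereditary $X_\eta$. The contradiction there requires building $x$ whose copies of $u$ are centred in \emph{all} residue classes of $\{0,\dots,b_0-1\}\setminus(-J_u)$ modulo $b_0$, so that ${\rm supp}(x)$ (and then ${\rm supp}(\tau(x))$) meets essentially every class mod $b_0$. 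But for any $x\in X_\eta$, being a limit of shifts of $\eta$, the set ${\rm supp}(x)\bmod b_0$ is contained in a single translate of ${\rm supp}(\eta)\bmod b_0$; when $X_\eta\neq X_{\mathscr{B}}$ this set may miss many classes, not just the zero class, so the configurations your construction needs do not exist in $X_\eta$ at all. No CRT placement of windows $\widehat{u}$ of $\eta$ can repair this: recurrence positions of a window of $\eta$ are constrained by congruences for \emph{every} $b\in\mathscr{B}$ simultaneously, so you cannot prescribe their residues mod $b_0$ freely. A further warning sign: your argument for the intersection claim never uses invertibility of $\tau$, so if it worked it would prove that every endomorphism of a hereditary $X_\eta$ is monotone --- precisely the question the authors leave open in Section~\ref{s:direction}.

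The paper's actual proof avoids the covering strategy entirely and works one modulus at a time, using the \emph{maximal} number $r$ of residue classes mod $b$ met by ${\rm supp}(\eta)$. Lemma~\ref{l1} shows $\tau(0^{-\N}10^\N)=S^k(0^{-\N}10^\N)$: invertibility rules out the image $0^\Z$, and if two single-support blocks at distance $s$ both coded to $1$, one picks $b\in\mathscr{B}$ coprime to $s$, uses recurrence of $\eta$ and heredity to realise $r$ well-separated ones in $r$ distinct classes mod $b$, and derives $Z+s=Z$ for an $r$-element set $Z$ of classes, contradicting $\gcd(s,b)=1$ and $r<b$. Lemma~\ref{l2} then shows every block with central $0$ codes to $0$: such a block occurs centrally in $\eta$ at some $s\equiv 0 \bmod b$, and combining it (again via heredity) with $r$ ones realising the maximal residue count forces ${\rm supp}(\tau(\eta))$ to meet $r+1$ classes mod $b$, which is impossible for an element of $X_\eta$. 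The crucial difference is that all configurations used there are obtained from genuine windows of $\eta$ by deleting $1$'s and are placed where $\eta$ itself realises them, so membership in $X_\eta$ is automatic --- exactly the realisability issue on which your construction founders.
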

Hence, in view of Theorem~\ref{atw2}, the following holds:
\begin{Cor}\label{c:t1}
Consider a proximal $\mathscr B$-free subshift $(X_\eta, S)$ such that $\mathscr{B}$ is taut. Then ${\rm Aut}(X_\eta,S)$ is trivial. \qed
\end{Cor}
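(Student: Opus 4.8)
The plan is to reduce everything to Theorem~\ref{t1}, whose only hypothesis is that $X_\eta$ be hereditary. Thus the entire task is to verify that, under the stated assumptions of tautness and proximality, the subshift $(X_\eta,S)$ is hereditary; once that is in place, the conclusion is immediate.

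First I would unwind the terminology. By the convention introduced earlier in the paper, calling $\mathscr{B}$ proximal means precisely that the induced $\mathscr{B}$-free subshift $(X_\eta,S)$ is proximal. Hence the hypothesis directly supplies proximality of $(X_\eta,S)$, with nothing further to prove at this stage. Next, since $\mathscr{B}$ is assumed taut, Theorem~\ref{atw2} applies and yields the equivalence between proximality and heredity of $(X_\eta,S)$. Combining this equivalence with the proximality just recorded shows that $(X_\eta,S)$ is hereditary.

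Finally I would invoke Theorem~\ref{t1} for the hereditary subshift $(X_\eta,S)$, which gives at once that ${\rm Aut}(X_\eta,S)=\{S^n:\:n\in\Z\}$, as claimed. There is no genuine obstacle here: the corollary is a direct chaining of Theorem~\ref{atw2} (converting proximality into heredity by exploiting tautness) and Theorem~\ref{t1} (deducing triviality of the automorphism group from heredity). The only point requiring minimal care is to distinguish \emph{proximal} as a property of the set $\mathscr{B}$ from \emph{proximal} as a property of the subshift $(X_\eta,S)$, but the paper's definition identifies the two, so this bookkeeping is harmless.
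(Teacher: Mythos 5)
Your proposal is correct and is exactly the derivation the paper intends (left implicit there): the hypothesis "$\mathscr{B}$ proximal" means $(X_\eta,S)$ is proximal by definition, tautness plus Theorem~\ref{atw2} converts this into heredity of $X_\eta$, and Theorem~\ref{t1} then gives triviality of ${\rm Aut}(X_\eta,S)$. No gaps; the bookkeeping point you flag about the two uses of "proximal" is handled correctly.
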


\begin{Remark}
The result contained in Corollary~\ref{c:t1} was already proved in \cite[Thm.~4.1]{Ke-2020} using so called generic dynamics tools.
\end{Remark}

To prove Theorem~\ref{t1}, we will need two lemmata which follow the strategy of the proof of Mentzen's theorem \cite{Me}.
Let $u\in\{0,1\}^{\Z}$ be defined as $u(n)=0$ for each $n\in\Z$ except for $n=0$. We write $u=0^{-\N}10^\N$. By heredity, $u\in X_\eta$.

\begin{Lemma}\label{l1} There exists $k\in\Z$ such that $\tau( u ) = S^ku$. Equivalently, the code $\phi:\{0,1\}^{\{-\ell,\ldots,\ell\}}\to\{0,1\}$ has the property that
it takes value~1 on exactly one block of length $2\ell+1$ having one element support.
\end{Lemma}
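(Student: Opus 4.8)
The plan is to deduce the whole statement from the single claim that $v \DefAs \tau(u)$ has exactly one $1$. Indeed, once this is known, $v = S^{k}u$ where $k$ is determined by $v(-k)=1$, and the reformulation in terms of $\phi$ is immediate: by \eqref{e3} one has $\Supp(v)=\{-j:\phi(\raz_{\{j\}})=1\}$, so $v$ carries a single $1$ precisely when $\phi$ equals $1$ on exactly one block of length $2\ell+1$ with one-element support. Two cheap observations start the proof. From \eqref{e4} and the fact that $\tau$ has radius $\ell$, the support of $v$ lies in $\{-\ell,\ldots,\ell\}$; and since $\tau$ is injective with $\tau(0^{\Z})=0^{\Z}$ by \eqref{e2}, we get $v\neq 0^{\Z}$. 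Thus $v$ has at least one $1$, and everything reduces to ruling out two or more.

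For the upper bound I would argue by contradiction, following the scheme of Proposition~\ref{prop:AutomMonoton}. Assume $\Supp(v)=\{a_{1}<\cdots<a_{p}\}$ with $p\ge 2$. The idea is to build an input $x\in X_\eta$ whose image is forced to meet \emph{every} residue class modulo some $b_{0}\in\mathscr{B}$; this contradicts $\mathscr{B}$-admissibility of $\tau(x)$, which holds because $\tau(x)\in X_\eta\subseteq X_{\mathscr{B}}$. The decisive new feature, compared with Proposition~\ref{prop:AutomMonoton}, is that the witness must lie in the possibly much smaller space $X_\eta$. Here I would use heredity together with $\Supp(\eta)=\mathcal{F}_{\mathscr{B}}$: any finite set of $\mathscr{B}$-free integers is coordinatewise $\le\eta$, so the configuration supported on it belongs to $X_\eta$. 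Concretely, I place pairwise far-apart single $1$'s (copies of $u$) at $\mathscr{B}$-free positions $n_{1},\ldots,n_{r}$ with gaps exceeding $2\ell$. Then $x\le\eta$ lies in $X_\eta$, the windows of the code see at most one $n_{i}$ at a time, and hence $\Supp(\tau(x))=\bigcup_{i}\bigl(n_{i}+\{a_{1},\ldots,a_{p}\}\bigr)$ is a disjoint union of copies of $\Supp(v)$.

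Choosing $b_{0}\in\mathscr{B}$ with $b_{0}>2\ell$, the set $\{a_{1},\ldots,a_{p}\}$ occupies at least two residues modulo $b_{0}$, so by prescribing the residues $n_{i}\bmod b_{0}$ one can arrange the translates $n_{i}+\{a_{j}\}$ to sweep out all of $\Z/b_{0}\Z$, in particular the zero class, which is the desired contradiction with admissibility. The \textbf{main obstacle} is exactly the step that is already delicate in Lemma~\ref{lemma:ShiftedCopies}: realising the prescribed residues by \emph{genuine} $\mathscr{B}$-free integers $n_{i}$. The zero class modulo $b_{0}$ contains no $\mathscr{B}$-free integer since $b_{0}\in\mathscr{B}$, when $\mathscr{B}$ is not coprime further classes may be unavailable, and in the Behrend case $\mathcal{F}_{\mathscr{B}}$ even has density zero. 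I expect to overcome this exactly as in the construction of $w_{u}$ in Lemma~\ref{lemma:ShiftedCopies}: pick $b_{0}$ large and coprime to the auxiliary product $P=\prod_{b\in\mathscr{B}\setminus\{b_{0}\},\,b\le L}b$, and fix the $n_{i}$ modulo $P\,b_{0}$ by the Chinese Remainder Theorem, which forces them to avoid all the relevant multiples, hence to be $\mathscr{B}$-free, while keeping them far apart; here primitivity and infinitude of $\mathscr{B}$ (the latter being forced by heredity together with the standing assumption that $\eta$ has infinite support) provide the necessary room. Finally, maximality of $\eta$ (Lemma~\ref{maksymalnosc}) is the complementary tool used afterwards: once $\tau(u)=S^{k}u$ is established, replacing $\tau$ by $S^{-k}\tau$ reduces the remaining work to an automorphism fixing $u$.
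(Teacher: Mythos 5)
Your reduction to showing that $v=\tau(u)$ has exactly one $1$, and the two opening observations (the support of $v$ lies in $\{-\ell,\dots,\ell\}$, and $v\neq 0^{\Z}$ by injectivity together with \eqref{e2}), are correct and agree with the paper. The gap is in the core of the contradiction argument. Your plan requires placing single $1$'s at \emph{genuinely $\mathscr{B}$-free} positions $n_{i}$ with \emph{prescribed} residues modulo $b_{0}$, so that $\bigcup_{i}(n_{i}+A)$, with $A=\{a_{1},\dots,a_{p}\}$, meets every class mod $b_{0}$. The tool you invoke for this, namely fixing the $n_{i}$ modulo $Pb_{0}$ by the Chinese Remainder Theorem, does not deliver it: avoiding the multiples of the finitely many $b\le L$ says nothing about the infinitely many remaining elements of $\mathscr{B}$, so the $n_{i}$ need not be $\mathscr{B}$-free. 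In Lemma~\ref{lemma:ShiftedCopies} this issue genuinely does not arise, because there one only needs a finite \emph{word} to be $\mathscr{B}$-admissible, and admissibility for large $b$ is free by counting ($\Card{\Supp(w_{u})}<b$); there is no analogous cheap argument forcing a prescribed integer to lie in $\mathcal{F}_{\mathscr{B}}$, which is what heredity ($x\le\eta$) demands. Worse, under the sole hypothesis of heredity (which covers non-coprime and Behrend situations), the residue classes you want to prescribe may contain no $\mathscr{B}$-free integers at all: all one knows is that $\mathcal{F}_{\mathscr{B}}\bmod b_{0}$ occupies some $r\le b_{0}-1$ classes, and nothing prevents $r\cdot p<b_{0}$, in which case no choice of $\mathscr{B}$-free $n_{i}$ makes $\bigcup_{i}(n_{i}+A)$ cover $\Z/b_{0}\Z$. (Note also that hitting the zero class alone is no contradiction: admissibility of $\tau(x)$ only requires that \emph{some} class be missed, not the zero class.) Your covering strategy is essentially that of Proposition~\ref{prop:AutomMonoton}, which lives on $X_{\mathscr{B}}$, and it does go through when $X_{\eta}=X_{\mathscr{B}}$ (the Erd\"os case); but Lemma~\ref{l1} is stated for arbitrary hereditary $X_{\eta}$, where it breaks down.

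The paper's proof avoids prescribing residues altogether, and this is the idea your proposal is missing. It fixes two positions in $\Supp(\tau(u))$ with difference $s$, and chooses $b\in\mathscr{B}$ with $\gcd(s,b)=1$ (possible since heredity implies proximality, so $\mathscr{B}$ contains an infinite coprime subset --- a choice your argument also omits). It then takes positions $m_{1},\dots,m_{r}$ in $\Supp(\eta)$ realizing the \emph{maximal} number $r$ of distinct residues mod $b$ that $\Supp(\eta)$ actually hits, makes them pairwise far apart using recurrence (a consequence of heredity, with maximality of $r$ guaranteeing the shifted positions occupy the same residues), and isolates them by heredity. Applying $\tau$ produces $1$'s in the classes $Z$ and $Z+s$ for a set $Z$ of $r$ distinct residues; since every finite window of $\tau(\eta)\in X_{\eta}$ occurs in $\eta$ up to a shift, maximality of $r$ forces $Z+s=Z$, which contradicts $\gcd(s,b)=1$ because $0<r<b$. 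Thus the contradiction comes from translation invariance of a maximal residue set, not from covering all residues, and this is exactly what makes the argument work without any arithmetic information about which residues $\mathcal{F}_{\mathscr{B}}$ occupies.
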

\begin{proof}
Note that $\phi$ cannot take value~0 on all blocks of length $2\ell+1$ with one element support as otherwise $\tau( u )=0^\Z$ and $\tau$ would not be invertible. Suppose  that the support of $\tau( u ) $ has at least two elements. So let $C_1,C_2$ be two different blocks of length $2\ell+1$ having one-element support and $\phi(C_1)=\phi(C_2)=1$. Let
$C_1(a_1)=C_2(a_2)=1$ and set $s:=a_1-a_2$. Choose $b\in\mathscr{B}$ so that
\beq\label{e7}
\gcd(s,b)=1.
\eeq
Such a $b$ exists as $X_\eta$ is hereditary, whence $(X_\eta,S)$ is proximal and therefore $\mathscr{B}$ contains an infinite coprime subset.
Let, with $r$ maximal,  $m_1,\ldots,m_{r}$ be such that $\eta(m_i)=1$ for $i=1,\ldots,r$ and $m_i$ mod $b$ are pairwise different (we recall that on $\eta$ at least the zero residue class of $b$ does not appear, so $r<b$ and all $m_{i}$ are different from zero modulo $b$). We claim that, with no loss of generality, we can assume that the distances $|m_i-m_j|$ for $i\neq j$ are as large as we want. Indeed, as we have seen, the heredity of $X_\eta$ implies that $\eta$ is recurrent. This means that for infinitely many $n\in\Z$ the elements $m_1+n,\ldots,m_{r}+n$ satisfy the aforementioned properties of $m_1,\ldots,m_{r}$ (the sets $\{m_1,\ldots,m_{r}\}$ and $\{m_1+n,\ldots,m_{r}+n\}$ must be equal modulo $b$ by the maximality of $r$). Now, proceed in the following way: take such a ``good'' $n_2$ arbitrary large and among elements
$m_1+n_2,\ldots,m_{r}+n_2$ find a one, say $m_j+n_2$ such that
$$
m_j+n_2=m_2\text{ mod }b$$
and replace $m_2$ by $m_j+n_2$. In this way $m_1$ and $m_2$ are well separated. Then choose another ``good'' $n_3$ (arbitrary large) so that
$$
m_j+n_3=m_3\text{ mod }b$$
for some $j$ and replace $m_3$ by $m_j+n_3$. We will obtain our claim in $r$ steps.

Having the elements $m_1,\ldots, m_{r}$ well separated, we consider a large interval $I$ in $\Z$ containing all these elements. Then let $C$ be the block whose positions are indexed by $I$ so that the only positions of~1's are given by $m_1,\ldots, m_{r}$. By heredity (and the fact that $\eta$ is transitive), it follows that the block $C$ appears in $\eta$ at a certain position $n$. Now $\tau( \eta ) \in X_\eta$, and since \( \tau \) is given by the code \( \phi \), it follows that \( \tau( \eta ) \) contains the image \( \phi(C) \) of the block $C$. Thus, for some $n'\in\Z$, we have
$$
\tau( \eta ) ( m_i+n' )=1=\tau( \eta )(m_i+n'+s)\text{ for }i=1,\ldots,r.$$
It follows that the set $Z$ consisting of $m_i+n'$ mod $b$, $i=1,\ldots,r$, is a subset contained in the support of $\eta$ modulo $b$ and so is the set $Z+s$ mod $b$. Since the cardinality of $Z$ is maximal, $Z+s=Z$ which is in conflict with~\eqref{e7}.
\end{proof}

In what follows, we replace $\tau\in {\rm Aut}(X_\eta,S)$ by $S^{-k}\tau$, and, by slightly enlarging the length $\ell$ if necessary, we can assume that the only block of length $2\ell+1$ with one element support whose code is 1 is the block that has 1 at the central position.  We continue the proof of Theorem~\ref{t1}, but now we have to show that $\tau=Id$.

\begin{Lemma}\label{l2} For an arbitrary block $C$ of length $2\ell+1$ with the zero at the central position, we have $\phi(C)=0$.\end{Lemma}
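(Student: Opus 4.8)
The plan is to show that, together with Lemma~\ref{l1}, this lemma forces $\phi(C)\le C(0)$ for every admissible block $C$ of length $2\ell+1$; that is, after the normalisation preceding the lemma, $\phi(C)=1$ only when $C$ carries a $1$ in its central coordinate. This is precisely monotonicity $\tau(x)\le x$ for all $x\in X_\eta$, whence, by the observation recorded before the proof of Theorem~\ref{t1} (a transitive maximal point with $\tau(x)\le x$ forces $\tau=\mathrm{Id}$) applied to the maximal transitive point $\eta$, the theorem follows. So the whole content is the arithmetic contradiction below. I argue by contradiction: suppose $\phi(C)=1$ for some $C$ with $C(0)=0$, and put $J=\mathrm{supp}(C)\subseteq\{-\ell,\dots,\ell\}\setminus\{0\}$, which is nonempty by \eqref{e4}.

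The mechanism is to feed $\tau$ an input whose image is forced to be non-admissible. Since $X_\eta$ is hereditary it is proximal, so $\mathscr B$ contains an infinite coprime subset; from it I fix a large $b\in\mathscr B$ with $b>2\ell$. I then build $x\le\eta$, hence $x\in\widetilde{X}_\eta=X_\eta$, by retaining only finitely many, pairwise far apart $1$'s of $\eta$, chosen so that: (i) a collection of isolated $1$'s occupies one representative of every nonzero residue class modulo $b$; by the normalisation each such isolated $1$ at $q$ gives $\tau(x)(q)=1$, so $\mathrm{supp}(\tau(x))$ meets every nonzero class; and (ii) the retained $1$'s near one chosen centre $p\equiv0\ (\mathrm{mod}\ b)$ are exactly those at $p+J$, so that (using $\eta(p)=0$ since $b\mid p$, and blanking all other nearby $1$'s) the window $x(p-\ell,p+\ell)$ equals $C$ and $\tau(x)(p)=\phi(C)=1$, placing a $1$ of $\tau(x)$ in the zero class modulo $b$. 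Then $\mathrm{supp}(\tau(x))$ meets every residue class modulo $b$. But $\tau(x)\in X_\eta\subseteq X_{\mathscr B}$ is $\mathscr B$-admissible, so its support must miss some class modulo $b$ — a contradiction, which proves $\phi(C)=0$.

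The delicate point — and the step I expect to be the main obstacle — is realising part (ii): producing a centre $p\equiv0\ (\mathrm{mod}\ b)$ with $\eta(p+j)=1$ for all $j\in J$, i.e.\ an element of $b\mathbb Z\cap\bigcap_{j\in J}(\mathcal F_{\mathscr B}-j)$, and likewise guaranteeing in (i) that every nonzero class modulo $b$ is genuinely occupied by a $\mathscr B$-free integer. The two requirements are at least compatible: because $b>\ell$ no $j\in J$ is divisible by $b$, so $p\equiv0\ (\mathrm{mod}\ b)$ does not clash with $\mathscr B$-freeness of the integers $p+j$. Since $C$ lies in the language of $X_\eta$, the $J$-pattern does occur in $\eta$, and the finitely many small-modulus constraints can be solved simultaneously by the Chinese Remainder Theorem, exactly as in Lemma~\ref{lemma:ShiftedCopies}. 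The genuinely arithmetic content is to upgrade this to an occurrence whose centre lies in the zero class modulo $b$; I would obtain it from the canonical odometer factor of the $\mathscr B$-free system, along which occurrences of a fixed word equidistribute modulo $b$, together with the density of the orbit of the transitive point $\eta$. This is where a low density of $1$'s (the relatively Behrend regime, still covered by the hereditary hypothesis) is most dangerous: one cannot invoke positive pattern frequencies, and existence must be read off directly from the Haar measure on the odometer and the transitivity of $\eta$.

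Finally, I note a variant staying closer to the proof of Lemma~\ref{l1}: rather than forcing all classes to be hit, let $Z=\mathrm{supp}(\eta)\bmod b$ be the maximal set of nonzero residues realised by $\mathscr B$-free integers; the isolated $1$'s realise $Z$ in $\mathrm{supp}(\tau(x))$, while the block $C$ adds the class $0\notin Z$. Since $\tau(x)$ is admissible, $\mathrm{supp}(\tau(x))\bmod b$ has at most $b-1$ elements, so once $Z$ exhausts all nonzero classes the inclusion $\mathrm{supp}(\tau(x))\bmod b\supseteq Z\cup\{0\}$ forces the same cardinality contradiction. Either way the combinatorics is immediate, and the only real work is the realisation discussed above.
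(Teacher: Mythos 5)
There is a genuine gap, and you have in fact located it yourself. Your mechanism (feed $\tau$ a finitely supported point below $\eta$ built from far-apart isolated $1$'s plus one copy of $C$, and contradict the residue structure of the image modulo some $b\in\mathscr{B}$) is the same as the paper's, but you fix the modulus $b$ \emph{first} (large, from a coprime subset) and then must produce an occurrence of the pattern $J={\rm supp}(C)$ centred at a position $p\equiv 0 \pmod b$. In the generality of Lemma~\ref{l2} --- $X_\eta$ merely hereditary, which includes the ``relatively Behrend'' regime --- there is no positive pattern frequency and no equidistribution of occurrences modulo $b$ to invoke; your appeal to the odometer and Haar measure is a hope, not an argument, and occurrences of a fixed pattern can a priori avoid residue classes beyond the automatically forbidden ones $(-J)\bmod b$. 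The paper makes this obstacle vanish by reversing the order of the choices: take \emph{any} central occurrence of $C$ in $\eta$, say at position $s$ (it exists by transitivity of $\eta$); since $C(0)=0$, we have $\eta(s)=0$, which by definition of $\eta$ \emph{means} that $s=mb$ for some $b\in\mathscr{B}$. It is this $b$ --- a divisor of the occurrence position, not a prescribed large coprime element --- that is used for the residue count, so nothing ever needs to be realised in a prescribed class.

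A second defect concerns the contradiction itself. Your main line needs the isolated $1$'s to occupy \emph{every} nonzero class modulo $b$, i.e., needs ${\rm supp}(\eta)\bmod b=\{1,\dots,b-1\}$; this fails for general hereditary $\mathscr{B}$, since a nonzero class can be entirely covered by multiples of the other elements of $\mathscr{B}$. Your fallback variant with $Z={\rm supp}(\eta)\bmod b$ is the right idea, but as you state it, it yields a contradiction only ``once $Z$ exhausts all nonzero classes'', because you contradict only $\mathscr{B}$-admissibility (at most $b-1$ classes). The correct completion --- and what the paper does --- is to contradict membership in $X_\eta$ itself: with $r=|Z|$ maximal, every $y\in X_\eta$ satisfies $|{\rm supp}(y)\bmod b|\le r$ by the word-transfer argument of Lemma~\ref{maksymalnosc} (every finite window of $y$ occurs in $\eta$, and the number of distinct residues met by the support of a window is shift-invariant). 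Since the image point has support containing a translate of $Z\cup\{0\}$, hence meeting $r+1$ classes, it cannot lie in $X_\eta$, with no fullness of $Z$ required. In short: choose $b$ as a divisor of the occurrence position of $C$, realise only the $r$ residues of $Z$ by far-apart isolated $1$'s (spread out by the recurrence argument from the proof of Lemma~\ref{l1}), and contradict the maximality of $r$ rather than admissibility. Without these two corrections your argument does not close; with them it becomes the paper's proof.
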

\begin{proof}
Suppose that for some $C$ as above, $\phi(C)=1$. Let $C$ appear (centrally) in $\eta$ at position $s$. We have $\eta(s)=0$, so there exists $b\in\mathscr{B}$ such that $s=mb$ for some $m\in\Z$. Choose positions $m_1,\ldots,m_{r}$ with \( \eta( m_{i} ) = 1 \) so that the distances between these elements are as large as we want (by the proof of Lemma~\ref{l1}), and mod $b$ they ``realize'' the maximal number $r$ of non-zero residue classes in $\eta$. In particular, the $m_{i}$ are all different from $s$.  Take a large block $E$ containing the following positions: $s$ (with the whole block~$C$) and $m_1,\ldots,m_{r}$. Replace all 1's that appear in it at all other positions than those given by $C$ and $m_1,\ldots, m_{r}$ by 0's, to obtain the block $F$.
By heredity, the block $F$ appears in $\eta$ at a certain position $r'$. It easily follows that the support of $\tau( \eta )$ will now contain a certain translation of the set $\{s,m_1,\ldots,m_{r}\}$, and since this set mod $b$ is of cardinality $\geq r+1$, we obtain $\tau( \eta ) \notin X_\eta$ and hence a contradiction.
\end{proof}

\noindent
{\bf Proof of Theorem~\ref{t1}} It follows from Lemma~\ref{l2} that for each $x\in X_\eta$, we have $\tau( x ) \leq x$.
It follows that $\eta\leq \tau^{-1}( \eta )$, and since Lemma~\ref{maksymalnosc} holds
in $X_\eta$, the result follows. \bez


\section{Proximal and taut \texorpdfstring{$\mathscr{B}$}{B}-free subshifts}\label{sec:hertaut}


Now, let $\mathscr{B}$ be taut and and assume that
the $\mathscr{B}$-free subshift $(X_\eta, S)$ is proximal. In view of Sections~\ref{sec:taut} and~\ref{sec:probeh}, the tautness of $\mathscr{B}$ has the following consequences: proximality and heredity of $(X_\eta,S)$ are equivalent notions, the Mirsky measure has full topological support $X_\eta$, and hence also the unique measure of maximal entropy $m_{\max}\in M(X_\eta,S)$ has full topological support $X_\eta$. In this section, we will prove the following theorem.\footnote{It holds trivially when $\mathscr{B}=\{1\}$.}

\begin{Th}\label{Thm:tautprox}
Suppose that $\mathscr{B}$ is taut and that the associated $\mathscr{B}$-free subshift $(X_\eta,S)$ is proximal. Let $\tau$ be any CA of $(X_\eta,S)$. Then the following are equivalent:
\begin{itemize}
\item[(a)] $m_{\max}(\tau(X_\eta))>0$.
\item[(b)] $\text{int}_{X_\eta}(\tau(X_\eta))\ne\varnothing$.
\item[(c)] $\tau$ is surjective.
\item[(d)] $\tau$ is pre-injective.
\item[(e)] $\tau$ is a power of the shift on $X_\eta$.
\end{itemize}
In particular,  $(X_\eta,S)$ satisfies the GoE theorem.
\end{Th}

\begin{Remark}
Theorem~\ref{Thm:tautprox} complements Theorem~\ref{t:GoEforBehrend}. The proof of Theorem~\ref{Thm:tautprox} makes heavy use of the property that the measure of maximal entropy has full topological support. So it will never apply to Behrend sets $\mathscr{B}$, for example, to which Theorem~\ref{t:GoEforBehrend} may apply. On the other hand, Theorem~\ref{t:GoEforBehrend} applies only to coprime $\mathscr{B}$, hence not to the $\mathscr{B}$-free sets of deficient numbers or of super-level sets of Euler's $\phi$-function. As these examples are based on taut and proximal $\mathscr{B}$, Theorem~\ref{Thm:tautprox} applies to them.
\end{Remark}


The proof of Theorem~\ref{Thm:tautprox} needs some preparation. The following lemma is certainly folklore for experts, but we could not locate it in the literature. We provide a proof for convenience.

\begin{Lemma}\label{lem:GK-1}
Let $X$ be a compact metric space, $S:X\to X$ continuous, and suppose $(X,S)$ has a unique measure $m_{\max}$ of maximal entropy. Let $\tau$ be an endomorphism of $(X,S)$ such that $m_{\max}(\tau(X))>0$. Then the following hold.
\begin{itemize}
\item[(a)] $m_{\max}(\tau(X))=1$.
\item[(b)] $m_{\max}$ is the only $S$-invariant measure $m$ on $X_\eta$ that satisfies $m\circ \tau^{-1}=m_{\max}$.
\item[(c)] If $y$ is generic for $m_{\max}$ and if $y=\tau(x)$, then $x$ is generic for $m_{\max}$.
\end{itemize}
\end{Lemma}

\begin{proof}
Observe first that $m_{\max}(\tau(X))=1$ as $m_{\max}$ is ergodic and $\tau(X)$ is $S$-invariant.  For the uniqueness claim in $(b)$, assume that $m\in M(X,S)$ satisfies $m\circ \tau^{-1}=m_{\max}$. We then have $h_{top}(S)=h_{m_{\max}}(S)\le h_m(S)\le h_{top}(S)$. Thus $m=m_{\max}$, as $(X,S)$ has a unique measure of maximal entropy. For existence, pick any generic point $y$ for $m_{\max}$ satisfying $\tau^{-1}(\{y\})\ne\varnothing$.  As $m_{\max}(\tau(X))=1$, the latter two properties are in fact true for $m_{\max}$-a.a.~$y$. Consider any $x\in \tau^{-1}(\{y\})$. By the compactness of $M(X,S)$, we may choose $m\in M(X,S)$ such that $x$ is generic for $m$ along some subsequence $(n_k)$. We then have for any $f\in C(X)$ that
\begin{displaymath}
\begin{split}
\int_X f \, {\rm d} (m\circ \tau^{-1}) &= \int_X f\circ \tau \, {\rm d} m = \lim_{k\to\infty}\frac1{n_k} \sum_{i=0}^{n_k-1} (f\circ \tau)(S^i x)\\
&= \lim_{k\to\infty}\frac1{n_k} \sum_{i=0}^{n_k-1} f(S^i y) = \int_X f \, {\rm d} m_{\max}.
\end{split}
\end{displaymath}
Hence $m$ satisfies $m\circ \tau^{-1}=m_{\max}$. As $m=m_{\max}$ by the above uniqueness argument, this shows $(b)$ and $(c)$.
\end{proof}

\begin{Lemma}\label{lem:GK-2}
Assume  that $(X,S)$ is a hereditary subshift which is intrinsically ergodic. Let $m_{\max}$ be the unique measure of maximal entropy and assume additionally that  $m_{\max}$ has full topological support. Let $\tau$ be a CA of $(X,S)$ such that $m_{\max}(\tau(X))>0$. Then $\tau$ is surjective and $\tau(0^{-\mathbb N}10^{\mathbb N})\ne 0^{\mathbb Z}$.
\end{Lemma}

\begin{proof}
As $\tau(X)$ is a compact set of full $m_{\max}$-measure by Lemma~\ref{lem:GK-1} and as $m_{\max}$ has full topological support, we have $\tau(X)=X$. The remainder of the proof is based on the Abramov-Rohlin formula, see e.g.~\cite{Le-Wa}. In our setting, its formula $(1.2)$ shows
\begin{equation}\label{eq:GK-1}
h_{m_{\max}}(S)=h_{m_{\max}}(S) + \int_X h(S,\tau^{-1}(\{y\})) \, {\rm d} m_{\max}(y),
\end{equation}
where
\begin{displaymath}
h(S,A)= \lim_{\delta\to0} \limsup_{n\to\infty} \frac{1}{n} \log\sup \left\{\mathrm{card}(E): \text{ $E\subset A$ is $(n,\delta)$-separated} \right\}
\end{displaymath}
for each $A\subset X$.
Let the CA $\tau$ be given by a block code $\phi$ of length $\ell$ and suppose for a contradiction that $\tau(0^{-\mathbb N}10^{\mathbb N})= 0^{\mathbb Z}$. Then $\phi(B)=0$ for each block $B\in \{0,1\}^{\{-\ell,\ldots,\ell\}}$ that contains at most one symbol $1$. Denote by $C$ the block $0^{2\ell}10^{2\ell}$. On all $m_{\max}$-generic $x\in X$, the block $C$ appears in $x$ with the same frequency $\kappa:= m_{\max}(C)>0$, as $X$ is hereditary and $m_{\max}$ is an ergodic measure with full topological support for $(X,S)$. At each of the occurrences of $C$ in such an $x$ we can randomly switch the $1$ in the middle of the block to $0$ or not. In any case we obtain a modified sequence $x'\in X$ such that $\tau(x')=\tau(x)$. Hence for all such $x$ we have
\begin{displaymath}
h(S, \tau^{-1}(\{\tau(x)\}))\ge \kappa\cdot \log 2.
\end{displaymath}
In view of Lemma~\ref{lem:GK-1} this shows that
\begin{displaymath}
h(S, \tau^{-1}(\{\tau(x)\}))\ge \kappa\cdot \log 2 \qquad \text{ for $m_{\max}$-a.a.\ $x$}.
\end{displaymath}
As $h_{m_{\max}}(S)<\infty$, this contradicts identity \eqref{eq:GK-1}.
\end{proof}

\begin{Lemma}\label{l:NotAllZeroMoore} Consider a hereditary $\mathscr{B}$-free subshift $(X_\eta, S)$. Assume that a CA $\tau$ on $(X_\eta,S)$ is surjective and that $\tau(0^{-\N}10^\N)\neq0^\Z$. Then $\tau$ is a power of a shift. In particular, $(X_\eta, S)$ satisfies the Moore property.
\end{Lemma}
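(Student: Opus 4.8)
The plan is to replay the Mentzen-style argument behind Lemmas~\ref{l1} and~\ref{l2}, noting that those proofs use only that $\tau$ maps $X_\eta$ into itself, and to substitute the present hypothesis $\tau(0^{-\N}10^\N)\neq 0^\Z$ for the invertibility that was previously used to rule out a collapse of the spike, closing the argument with surjectivity and maximality in place of $\tau^{-1}$. Before starting I would record the standing facts. Heredity makes $X_\eta$ proximal, in particular not the full shift, so $0^\Z$ is its unique fixed point and $\tau(0^\Z)=0^\Z$; proximality also gives that $\mathscr{B}$ contains an infinite pairwise coprime subset, hence for any $s\neq 0$ there are arbitrarily large $b\in\mathscr{B}$ with $\gcd(s,b)=1$; and $\eta$ is transitive, recurrent and maximal (Lemma~\ref{maksymalnosc}). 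I would also isolate the structural fact used throughout: every $x\in X_\eta$ meets at most as many residue classes modulo $b$ as $\eta$ does, since any finite word of $x$ witnessing $k$ distinct residues mod $b$ occurs in some $S^n\eta$ and thereby forces $k$ residues in $\eta$.

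Writing $u=0^{-\N}10^\N$, the length-$(2\ell+1)$ windows of $u$ are either the zero block or a block with one-element support, so the support of $\tau(u)$ is finite and contained in $\{-\ell,\dots,\ell\}$, and the hypothesis $\tau(u)\neq 0^\Z$ says the code $\phi$ takes value $1$ on at least one block with one-element support. First I would show it does so on exactly one such block, which is precisely the counting argument of Lemma~\ref{l1}: if two such blocks, with their $1$'s at positions differing by $s\neq 0$, both had code $1$, one picks $b\in\mathscr{B}$ coprime to $s$, uses recurrence to spread out a maximal family of residues of $\eta$ mod $b$, and finds that $\tau(\eta)$ would meet a residue set $Z$ and its translate $Z+s$ mod $b$; maximality of $Z$ forces $Z+s=Z$, contradicting $\gcd(s,b)=1$. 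Since this uses only $\tau(\eta)\in X_\eta$, it applies unchanged, giving $\tau(u)=S^{k}u$ for some $k$. Replacing $\tau$ by $S^{-k}\tau$ (still a surjective CA) and harmlessly enlarging $\ell$, I may assume the unique one-element-support block with code $1$ is the centered one.

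Next I would show $\tau(x)\leq x$ for all $x$, which is exactly Lemma~\ref{l2}: were some block $C$ with a $0$ in its centre to have code $1$, let $C$ occur centrally at a position $s$ of $\eta$; then $\eta(s)=0$ gives $s\equiv 0\pmod b$ for some $b\in\mathscr{B}$, and placing $C$ at $s$ alongside well-separated isolated $1$'s $m_1,\dots,m_r$ realizing all $r$ nonzero residues mod $b$ attained by $\eta$ (heredity lets us zero out everything else) makes $\tau(\eta)$ acquire $1$'s at a translate of $\{s,m_1,\dots,m_r\}$, meeting $r+1$ residue classes mod $b$, impossible in $X_\eta$ by the structural fact above. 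Hence $\phi$ vanishes on every central-$0$ block, i.e.\ $\tau(x)\leq x$ for all $x$. Finally I would use surjectivity: choose $z$ with $\tau(z)=\eta$; then $\eta=\tau(z)\leq z$, so $z=\eta$ by maximality, whence $\tau(\eta)=\eta$, and since $\eta$ is transitive this forces $\tau$ to be the identity. Undoing the normalization shows the original CA equals $S^{k}$.

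The technical heart, and the step I expect to give the most trouble, is the arithmetic in the analogues of Lemmas~\ref{l1} and~\ref{l2}: producing inside $\eta$, via recurrence and heredity, a configuration whose $\tau$-image witnesses one extra occupied residue class mod $b$. The genuinely new ingredient relative to the automorphism case is the last step, where $\tau^{-1}$ is unavailable: surjectivity supplies a preimage $z$ of $\eta$ and maximality upgrades $\eta\leq z$ to $\eta=z$, which is exactly what is needed. As for the Moore property, a power of the shift is bijective and hence pre-injective, so for every surjective CA preserving the spike the implication ``surjective $\Rightarrow$ pre-injective'' holds; the spike hypothesis is essential, since $\tau(u)=0^\Z=\tau(0^\Z)$ would identify the two asymptotic points $u$ and $0^\Z$ and destroy pre-injectivity.
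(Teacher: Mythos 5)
Your proposal is correct and follows essentially the same route as the paper: it reuses the Mentzen-style arguments of Lemmas~\ref{l1} and~\ref{l2} (observing that they need only $\tau(X_\eta)\subset X_\eta$, with the hypothesis $\tau(0^{-\N}10^\N)\neq 0^\Z$ standing in for the invertibility step), normalizes by a power of the shift, and then closes via surjectivity plus the maximality of $\eta$ from Lemma~\ref{maksymalnosc} and transitivity. Your closing remarks on the Moore property and the necessity of the spike hypothesis also match the paper's intent, so there is nothing to correct.
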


\begin{proof}
Write $x_0=0^\Z$ and $x_1=0^{-\N}10^\N$ and note that indeed $x_0, x_1\in X_\eta$ by heredity.
Note first that there exists $r\in\Z$ such that $\tau(x_1)=S^rx_1$ by the same proof as in Lemma~\ref{l1}, where $\tau(0^{-\N}10^\N)\neq0^\Z$ is now true by assumption and not as a consequence of injectivity. As $\tau$ is surjective if and only if $\tau\circ S^{-r}$ is surjective, by redefining $\tau$ we may assume without loss of generality that
$\tau(x_1)=x_1$. Now, Lemma~\ref{l2} yields $\tau(x)\leq x$ for each $x\in X_\eta$.
As $\tau$ is assumed surjective, there exists $z\in X_\eta$ such that $\tau(z)=\eta$. But $\eta=\tau(z)\leq z$,
which is only possible if $\eta=z$ by the maximality property of $\eta$ stated in Lemma~\ref{maksymalnosc}. Since $\eta$ is a transitive point in $X_\eta$, we get that $\tau$ is the identity.
\end{proof}

We can now prove Theorem~\ref{Thm:tautprox}.

\begin{proof}[Proof of Theorem~\ref{Thm:tautprox}]

$(a) \Rightarrow (e)$: This follows from Lemma~\ref{lem:GK-2} and Lemma~\ref{l:NotAllZeroMoore}.

\noindent $(e) \Rightarrow (d)$: This is trivial.

\noindent $(d) \Rightarrow (c)$: This follows from Proposition~\ref{t:goe1}.

\noindent $(c) \Rightarrow (b)$: This is trivial.

\noindent $(b) \Rightarrow (a)$: This holds as $m_{\max}$ has support $X_\eta$.
\end{proof}

\section{Proximal \texorpdfstring{$\mathscr B$}{B}-free subshifts and open questions} \label{s:direction}

It is not clear to us to which extent Theorem~\ref{t1} is true in the (general) proximal case. We have however the following reduction of the problem.

\begin{Cor}\label{wn2}
Let $(X_\eta, S)$ be a proximal $\mathscr{B}$-free subshift. Let $\mathscr{B}'$ be the unique taut set as in Theorem~\ref{atw3}. Assume that $\tau\in {\rm Aut}(X_\eta,S)$. Then, $X_{\eta'}={\rm supp}(\nu_\eta)={\rm supp}(\nu_{\eta'})$ and $\tau( X_{\eta'} ) =X_{\eta'}$ with $\tau|_{X_{\eta'}}=S^k$ for some $k\in\Z$.
\end{Cor}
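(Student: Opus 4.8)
The plan is to establish the three assertions in turn, exploiting the taut reduction $\mathscr{B}'$ together with the intrinsic ergodicity supplied by Corollary~\ref{c:entfor}. Throughout I keep in mind that $\mathscr{B}'$ is taut and that $X_{\eta'}\subset X_\eta$ by Proposition~\ref{stw1}.

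First I would dispose of the identity $X_{\eta'}={\rm supp}(\nu_\eta)={\rm supp}(\nu_{\eta'})$, which is essentially bookkeeping of the earlier theorems. Since $\nu_\eta=\nu_{\eta'}$ by Theorem~\ref{atw3}, the supports ${\rm supp}(\nu_\eta)$ and ${\rm supp}(\nu_{\eta'})$ coincide; and because $\mathscr{B}'$ is taut, Theorem~\ref{atw1} applied to $\mathscr{B}'$ gives ${\rm supp}(\nu_{\eta'})=X_{\eta'}$, which is the first claim. Along the way I record that $X_{\eta'}$ is hereditary: as $X_{\eta'}\subset X_\eta$, the system $(X_{\eta'},S)$ is proximal, being a subsystem of the proximal system $(X_\eta,S)$; since $\mathscr{B}'$ is taut, Theorem~\ref{atw2} then yields heredity of $X_{\eta'}$, so that $\widetilde{X}_{\eta'}=X_{\eta'}$.

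Next I would prove $\tau(X_{\eta'})=X_{\eta'}$ by identifying $X_{\eta'}$ as the support of the unique measure of maximal entropy of $(X_\eta,S)$. By Corollary~\ref{c:entfor} the proximal system $(X_\eta,S)$ is intrinsically ergodic with measure of maximal entropy $\kappa:=\nu_\eta\ast B(\frac12,\frac12)=\nu_{\eta'}\ast B(\frac12,\frac12)$. The support of $\kappa$ is the hereditary closure of ${\rm supp}(\nu_{\eta'})=X_{\eta'}$: this uses Theorem~\ref{atw1} together with the observation recalled before Corollary~\ref{c:goe1}, that convolving a full-support Mirsky measure with Bernoulli noise keeps full support inside the hereditary closure. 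As $X_{\eta'}$ is already hereditary, this gives ${\rm supp}(\kappa)=X_{\eta'}$. Now $\tau$ is a homeomorphism commuting with $S$, so $\tau_*\kappa$ is again a measure of maximal entropy for $(X_\eta,S)$; intrinsic ergodicity forces $\tau_*\kappa=\kappa$, and therefore $\tau(X_{\eta'})=\tau({\rm supp}\,\kappa)={\rm supp}(\tau_*\kappa)={\rm supp}\,\kappa=X_{\eta'}$. Finally, with $\tau(X_{\eta'})=X_{\eta'}$ in hand, the restriction $\tau|_{X_{\eta'}}$ is a self-homeomorphism of $X_{\eta'}$ commuting with $S$, with inverse $\tau^{-1}|_{X_{\eta'}}$, hence an element of ${\rm Aut}(X_{\eta'},S)$. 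But $(X_{\eta'},S)$ is a hereditary $\mathscr{B}'$-free subshift, so Theorem~\ref{t1} shows that ${\rm Aut}(X_{\eta'},S)$ is trivial; consequently $\tau|_{X_{\eta'}}=S^k$ for some $k\in\Z$, the third claim.

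The first paragraph is routine. I expect the crux to be the middle step: recognising that even when $\mathscr{B}$ is merely proximal — so that $X_\eta$ is in general neither taut nor hereditary and may strictly contain $X_{\eta'}$ — the unique maximal measure $\kappa$ still concentrates exactly on the taut hereditary piece $X_{\eta'}$, so that the automorphism-invariance of $\kappa$ pins $X_{\eta'}$ down as a $\tau$-invariant set. Once this is secured, the reduction to Theorem~\ref{t1} on $X_{\eta'}$ is immediate.
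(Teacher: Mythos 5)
Your proof is correct and follows the same route as the paper: identify $X_{\eta'}$ as the support of the unique measure of maximal entropy $\nu_{\eta'}\ast B(\frac12,\frac12)$ supplied by Corollary~\ref{c:entfor}, deduce $\tau$-invariance of that support from intrinsic ergodicity, and conclude via Theorem~\ref{t1}. The only difference is that you spell out the details the paper compresses into ``it is not hard to see'' (heredity of $X_{\eta'}$ via Proposition~\ref{stw1} and Theorems~\ref{atw2},~\ref{atw1}, and the full-support observation preceding Corollary~\ref{c:goe1}), which is a faithful and complete elaboration of the same argument.
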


\begin{proof} Since $\mathscr{B}'$ is taut, by Theorem~\ref{atw1}, $X_{\eta'}={\rm supp}(\nu_{\eta'})={\rm supp}(\nu_\eta)$. Moreover, by Corollary~\ref{c:entfor}, $(X_\eta,S)$ is intrinsically ergodic with the measure of maximal entropy $\nu_{\eta'}\ast B(\frac12,\frac12)$. Hence this latter measure must be preserved by $\tau$. In particular, $\tau$ preserves its support. It is not hard to see that the support of $\nu_{\eta'}\ast B(\frac12,\frac12)$ is $X_{\eta'}$ and the result now follows from Theorem~\ref{t1}.
\end{proof}

In other words, in the general proximal case we reduced the problem of determining ${\rm Aut}(X_\eta,S)$ to the ``relative Behrend'' situation. This yields an intriguing question:
What is the automorphism group of a $\mathscr{B}$-free system in the Behrend case itself?\footnote{As shown in the recent article \cite{Dy-Ka-Ke}, in the minimal case, the group of automorphisms can be non-trivial.} It would be interesting to know whether the GoE theorem holds in this case.
It would also be interesting to know whether, in the hereditary case, all codes given by elements of ${\rm End}(X_\eta,S)$ are monotone. We were not able to decide whether, in the general case of a $\mathscr{B}$-free system, $\tau$ preserves the Mirsky measure, i.e.\ $\nu_\eta\circ \tau=\nu_\eta$,  for all $\tau\in {\rm Aut}(X_\eta,S)$.

\smallskip

It also seems natural to consider questions about a measure-theoretic GoE theorem. We give some samples of that. Assume that a CA $\tau$ preserves the Mirsky measure. Let $\tau$ be called $\nu_\eta$-pre-injective if it is injective on any collection of $0-1$-sequences that differ in a set of zero density. Is this notion equivalent to $\tau$ being onto $\nu_\eta$-almost surely? If yes, is there an analogous result for measures different from the Mirsky measure? May GoE proofs be more transparent when using isomorphism to the maximal equicontinuous generic factor  \cite{Ke-2020} or to the Kronecker factor?

\vspace{2ex}

\noindent
{\bf Acknowledgments}
CR would like to thank ML for an invitation to Toruń in September 2018, where this project had been initially discussed. Research of the first author is supported by Narodowe Centrum Nauki grant UMO-2019/33/B/ST1/00364.

{\footnotesize
}

\medskip

\noindent
Faculty of Mathematics and Computer Science,\\
Nicolaus Copernicus University, Chopin street 12/18, 87-100 Toruń, Poland,\\
mlem@mat.umk.pl, dsell@mat.umk.pl\\
Department of Mathematics,\\ University of Erlangen-N\"urnberg,
Cauerstraße 11, 91058 Erlangen, Germany,\\
keller@math.fau.de, christoph.richard@fau.de
\end{document}